\newtheorem{thm}{Theorem}[section]
\newtheorem{conj}{Conjecture}[section]
\newtheorem{cor}{Corollary}[section]
\newtheorem{lem}{Lemma}[section]
\newtheorem{obs}{Observation}[section]
\newtheorem{prp}{Proposition}[section]
\theoremstyle{definition}
\newtheorem{ques}{Question}[section]
\newtheorem{rmk}{Remark}[section]
\newtheorem{defn}{Definition}[section]
\DeclareMathOperator{\area}{area}
\DeclareMathOperator{\rk}{rank}
\DeclareMathOperator{\M}{Motz}
\DeclareMathOperator{\Dy}{Dyck}
\DeclareMathOperator{\B}{\mathcal{B}}
\DeclareMathOperator{\inv}{inv}
\DeclareMathOperator{\exc}{exc}
\DeclareMathOperator{\nzb}{\text{nzb}}
\DeclareMathOperator{\maj}{maj}
\title[Area and rank of a Dyck path]{Counting Dyck paths by area and rank}
\author[S. A. Blanco and T. K. Petersen]{Sa\'ul A. Blanco and T. Kyle Petersen}
\address{Department of Mathematical Sciences, DePaul University, Chicago IL 60614}
\email{sblancor@depaul.edu, tkpeters@math.depaul.edu}
\begin{document}

\maketitle

\begin{abstract}
The set of Dyck paths of length $2n$ inherits a lattice structure from a bijection with the set of noncrossing partitions with the usual partial order. In this paper, we study the joint distribution of two statistics for Dyck paths: \emph{area} (the area under the path) and \emph{rank} (the rank in the lattice). 

While area for Dyck paths has been studied, pairing it with this rank function seems new, and we get an interesting $(q,t)$-refinement of the Catalan numbers. We present two decompositions of the corresponding generating function: one refines an identity of Carlitz and Riordan; the other refines the notion of $\gamma$-nonnegativity, and is based on a decomposition of the lattice of noncrossing partitions due to Simion and Ullman.

Further, Biane's correspondence and a result of Stump allow us to conclude that the joint distribution of area and rank for Dyck paths equals the joint distribution of length and reflection length for the permutations lying below the $n$-cycle $(12\cdots n)$ in the absolute order on the symmetric group.
\end{abstract}

\section{Introduction}

The \emph{Catalan numbers} $C_n = \frac{1}{n+1}\binom{2n}{n}$ count many sets of combinatorial objects; see \cite[Exercise 6.19]{EC2}. Chief among these are \emph{Dyck paths} and \emph{noncrossing partitions}.

The area statistic for Dyck paths is well studied. For example, we have the following refined Catalan identity due to Carlitz and Riordan \cite{CR64}:
\begin{equation}\label{eq:area} \Dy(n;q) := \sum_{p \in \Dy(n)} q^{\area(p)} = \sum_{k=0} q^k \Dy(k;q)\Dy(n-1-k;q),
\end{equation} 
where $\Dy(n)$ denotes the set of Dyck paths of length $2n$.

Similarly, the rank function for the lattice of noncrossing partitions of $\{1,2,\ldots,n\}$ is well understood. For instance, rank is $n$ minus the number of blocks, and the Narayana numbers $N_{n,k}=\frac{1}{k}\binom{n-1}{k-1}\binom{n}{k-1}$ count the number of noncrossing partitions of $n$ with $k$ blocks. One of the more interesting results about this function is the following identity, which can be found in \cite[Proposition 11.14]{PRW08}. Let $NC(n)$ denote the set of noncrossing partitions. Then,
\begin{equation}\label{eq:rank}
NC(n;t) := \sum_{\pi \in NC(n)} t^{\rk(\pi)} = \sum_{0\leq j\leq (n-1)/2} \gamma_j t^j (1+t)^{n-1-2j},
\end{equation}
where the $\gamma_j$ are nonnegative integers. (One combinatorial interpretation shows $\gamma_j$ is the number of $132$-avoiding permutations with $j$ descents and $j$ peaks.) Notice that both symmetry and unimodality of the Narayana numbers (i.e., $N_{n,k} = N_{n,n+1-k}$ and $N_{n,1} \leq \cdots \leq N_{n,\lceil n/2 \rceil} \geq \cdots \geq N_{n,n}$) follow from this result. Further, it can be shown that $\gamma_j = C_j\binom{n-1}{2j}$, so we get the following Catalan identity by setting $t=1$ in Equation \eqref{eq:rank} (see \cite[Corollary 3.1]{SU91}): \[ C_n = \sum_{0\leq j \leq (n-1)/2} C_j\binom{n-1}{2j}2^{n-1-2j}.\] See \cite{NP11, PetersenShards, PRW08} for extensive discussion of the numbers $\gamma_j$ in the expansion, and of generating functions exhibiting similar expansions.

\begin{rmk}
A polynomial that is in the nonnegative span of the polynomials $\Gamma_d = \{ t^j(1+t)^{d-2j} : 0\leq j \leq d/2\}$ (for fixed $d$) is called \emph{$\gamma$-nonnegative}. This property has gained interest since work of Gal \cite{G05} showed that for $h$-polynomials of simplicial spheres, $\gamma$-nonnegativity implies the Charney-Davis conjecture. The polynomial $NC(n;t)$ above is the $h$-polynomial of the \emph{associahedron}, so \eqref{eq:rank} is an example satisfying Gal's condition. See also \cite{B04/06, NP11, NPT11, PRW08, St08}.
\end{rmk}

In this paper, we study a refinement of both $\Dy(n;q)$ and $NC(n;t)$, that yields, among other things, refined versions of Equation \eqref{eq:area} and Equation \eqref{eq:rank}.

We interpret this bivariate polynomial in two ways. Primarily, we consider the joint distribution of area and rank for Dyck paths (where the rank is defined via a bijection with noncrossing partitions), and define \[ \Dy(n;q,t) = \sum_{ p \in \Dy(n)} q^{\area(p)}t^{\rk(p)}.\] As shown in Corollary \ref{cor:ll'}, we can also interpret this polynomial as the joint distribution of length and reflection length for permutations in the interval $[e,(12\cdots n)]$ in the absolute order on $S_n$.

\begin{rmk}
While setting $q=t=1$ in $\Dy(n;q,t)$ yields the Catalan number $C_n$, this polynomial should not be confused with the ``$(q,t)$-Catalan polynomials" that arise in the study of MacDonald polynomials. See Haglund's manuscript \cite{H08} for a thorough treatment. %In particular, while that polynomial can be specialized to give the generating function for the major index statistic, ours cannot, as first seen when $n=3$. 
\end{rmk}

\begin{rmk}
Before stating our main results, we wish to point out that while there is a natural bijection that takes a noncrossing partition with $k$ blocks to a Dyck path with $k$ peaks, the bijection we use, first due to Stump \cite{Stump}, is different. In particular, the path with one peak corresponds to the noncrossing partition $\{\{1,n\}, \{2,n-1\},\ldots\}$ of rank $\lfloor n/2\rfloor$.
\end{rmk}

Our first result is to show that these polynomials satisfy a Catalan-style recurrence.

\begin{thm}[A refined Catalan recurrence]\label{thm:recurrence}
For $n\geq 1$, we have \[ \Dy(n;q,t) = \sum_{k=0}^{n-1} (qt)^k \Dy(k; q, 1/t)\Dy(n-1-k; q, t).\]
\end{thm}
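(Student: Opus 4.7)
The plan is to establish the recurrence by the standard first-return decomposition of Dyck paths, tracking how both statistics transform. Every Dyck path $p$ of length $2n\ge 2$ admits a unique decomposition $p = U\alpha D\beta$, where $U$ is the initial up-step, $D$ is the matching down-step of the first return to the $x$-axis, $\alpha\in\Dy(k)$ is the portion strictly between them (translated down), and $\beta\in\Dy(n-1-k)$ is the portion after them, for some $0\le k\le n-1$. Summing over $k$ stratifies $\Dy(n)$. The area contribution is classical: the initial $U$ raises $\alpha$ by one unit, contributing $k$ extra unit squares beyond $\area(\alpha)$, while $\beta$ contributes independently, so $\area(p) = k + \area(\alpha) + \area(\beta)$; this is exactly the mechanism that yields \eqref{eq:area} in the univariate case.

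The new step is to establish the analogous rank identity
\[ \rk(p) = k - \rk(\alpha) + \rk(\beta), \]
which is the main obstacle of the proof. Granting it, the recurrence follows immediately: the factor $(qt)^k$ collects the $k$'s appearing in the $q$- and $t$-exponents, the substitution $t\mapsto 1/t$ in $\Dy(k;q,1/t)$ records the sign flip in front of $\rk(\alpha)$, and the contribution from $\beta$ is unchanged, giving
\[ \Dy(n;q,t) = \sum_{k=0}^{n-1}\sum_{\alpha\in\Dy(k)}\sum_{\beta\in\Dy(n-1-k)} q^{k+\area(\alpha)+\area(\beta)}\, t^{k-\rk(\alpha)+\rk(\beta)} = \sum_{k=0}^{n-1}(qt)^k\,\Dy(k;q,1/t)\,\Dy(n-1-k;q,t). \]

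My approach to the rank identity is to translate the first-return decomposition through Stump's bijection and analyze the block structure of the associated noncrossing partition. Writing $b(\cdot)$ for the number of blocks, the claim is equivalent to $b_p = (k+1) + b_\beta - b_\alpha$, which suggests a splitting of $\{1,\dots,n\}$ into a piece of $n-1-k$ indices carrying $\beta$'s partition unchanged and a complementary piece of $k+1$ indices whose block structure is determined by $\alpha$ in a rank-reversing manner --- the behavior of a Kreweras-type complement on $NC(k)$ (which sends rank $j$ to rank $k-1-j$) together with one distinguished ``root'' index. I would first verify the identity on the extremes flagged in the earlier remark: for the one-peak path $U^nD^n$ one takes $\alpha = U^{n-1}D^{n-1}$ with $\beta$ empty and should recover the partition $\{\{1,n\},\{2,n-1\},\dots\}$ of rank $\lfloor n/2\rfloor$; for the staircase $(UD)^n$ one takes $\alpha$ empty and $\beta=(UD)^{n-1}$ and should recover the discrete partition of rank $0$. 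The general case would then follow by unpacking Stump's bijection explicitly, or inductively on $n$ once its compatibility with the first-return split is verified.
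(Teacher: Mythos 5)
Your overall architecture matches the paper's exactly: the first-return decomposition $p = U\alpha D\beta$, the area identity $\area(p)=k+\area(\alpha)+\area(\beta)$, and the rank identity $\rk(p)=k-\rk(\alpha)+\rk(\beta)$, from which the recurrence follows by precisely the bookkeeping you give. You have also correctly isolated the rank identity as the only nontrivial point (it is the paper's Proposition \ref{prp:decomp}). The problem is that you do not prove it. Checking the two extreme paths and asserting that the general case ``would then follow by unpacking Stump's bijection explicitly, or inductively on $n$ once its compatibility with the first-return split is verified'' is a plan, not an argument; the compatibility you defer to is exactly the content of the claim. Your Kreweras-complement heuristic does point in the right direction --- under $\phi$ the partition of $p$ splits into $\phi(\beta)$ shifted onto the last $n-1-k$ indices together with a complement-like partition of the first $k+1$ indices having $(k+1)-b_\alpha$ blocks --- but neither the splitting nor the block count of that complementary piece is established, and the second of these is where all the work lies.

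For comparison, the paper closes this gap with the Simion--Ullman encoding: Observation \ref{obs:rk} gives $\rk(p)=|B(w)|+|R(w)|$ and $k=2|B(w)|+|L(w)|+|R(w)|+1$ for the SU-word $w=w(p_1)$ of a path $p_1\in\Dy(k)$, and a finite case analysis of how ``raising'' a path (prepending an up step, appending a down step) rewrites consecutive letter pairs shows $w'_1\in\{b,r\}$ and $w_i\in\{b,l\}$ if and only if $w'_{i+1}\in\{b,r\}$, whence $\rk(p_1^+)=|B(w)|+|L(w)|+1=k-\rk(p_1)$. If you want to complete your version without SU-words, you must prove directly from Stump's $\phi$ that (i) no block of $\phi(p)$ meets both index ranges, because the path touches the $x$-axis at $x=2k+2$, and (ii) the induced partition on $\{1,\dots,k+1\}$ has exactly $(k+1)-b_\alpha$ blocks; step (ii) is the raising analysis in disguise and cannot be waved through.
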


We use Theorem \ref{thm:recurrence} to derive a continued fraction expansion for the ordinary generating function \[ \Dy(q,t,z) = \sum_{n\geq 0} \Dy(n;q,t) z^n\] in Proposition \ref{prp:cf}.

One way to prove the $\gamma$-nonnegativity of Equation \ref{eq:rank} is to use the idea of a \emph{symmetric boolean decomposition} of the lattice of noncrossing partitions, found in work of Simion and Ullman \cite{SU91}. See \cite{PetersenShards,PRW08}. Loosely, this is a partition of a poset $P$ into disjoint boolean intervals whose middle ranks coincide with the middle rank of $P$. In Section \ref{sec:SU} we revisit Simion and Ullman's decomposition to show the following decomposition of $\Dy(n;q,t)$.

\begin{thm}[Refined $\gamma$-nonnegativity]\label{thm:main}
For $n\geq 1$, we have \[ \Dy(n;q,t) = \sum_{m \in \M(n)} q^{\area(m)}t^{\rk(m)}(1+qt)^{n-1-2\rk(m)},
  \]
where $\M(n)$ denotes the set of Motzkin paths of order $n$ (length $n-1$).
\end{thm}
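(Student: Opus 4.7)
The plan is to adapt the Simion--Ullman symmetric boolean decomposition of the noncrossing partition lattice $NC(n)$ to the Dyck path setting via Stump's bijection, and then to enhance the argument to track area as well as rank.

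First, I would recall the Simion--Ullman decomposition in detail: it expresses $NC(n)$ as a disjoint union of boolean intervals $[x_\mu,y_\mu]$, indexed by some family of combinatorial objects $\mu$, with the symmetry property $\rk(x_\mu)+\rk(y_\mu)=n-1$. Pushing this decomposition through Stump's bijection yields a disjoint decomposition
\[
\Dy(n) \;=\; \bigsqcup_{\mu} [p_\mu, P_\mu],
\]
where each interval is a boolean sublattice of $\Dy(n)$ of rank $n-1-2\rk(p_\mu)$.

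Second, I would identify the indexing set $\{p_\mu\}$ of bottom elements with $\M(n)$. The natural parameters of the Simion--Ullman intervals correspond to Motzkin-path data (up/down steps for the ``skeleton'' forced by the minimum, and horizontal steps at the flippable positions), and I would check that under Stump's bijection the Dyck path $p_\mu$ is determined by a Motzkin path $m\in\M(n)$ with $\rk(p_\mu)=\rk(m)$ (the number of up-steps) and $\area(p_\mu)=\area(m)$ (the area under the Motzkin path, with horizontal steps at their natural height). This reduces the theorem to showing that the generating function over each boolean interval $[p_\mu,P_\mu]$ equals $q^{\area(m)}t^{\rk(m)}(1+qt)^{n-1-2\rk(m)}$.

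Third, and this is where the main work lies, I would show that the atoms of each boolean interval $[p_\mu,P_\mu]$ are independent local moves on the corresponding Dyck path, each of which increases the rank by $1$ and the area by $1$. Since there are exactly $n-1-2\rk(m)$ such atoms, corresponding to the horizontal steps of $m$, their joint contribution to $\sum q^{\area}t^{\rk}$ is exactly $(1+qt)^{n-1-2\rk(m)}$, and multiplying by the contribution $q^{\area(m)}t^{\rk(m)}$ of the bottom element and summing over $m\in\M(n)$ gives the desired identity.

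The main obstacle will be this third step. Rank additivity is essentially automatic from the Simion--Ullman setup, but the area statistic is not intrinsic to their construction, so I must use the explicit description of Stump's bijection to verify that each atomic move (e.g.\ replacing a flat step in the Motzkin skeleton by a small peak, or some analogous local modification on the Dyck path side) contributes precisely one unit of area in addition to one unit of rank. Showing that these local changes are mutually independent and additive on area, uniformly over all intervals in the decomposition, is the heart of the proof.
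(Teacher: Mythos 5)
Your proposal is correct and follows essentially the same route as the paper: the paper uses Simion and Ullman's symmetric boolean decomposition $\Dy(n)=\bigcup_{m\in\M(n)}\B(m)$ (Theorem \ref{thm:SBD}), with minimal element $\rho(m)$ in each block, and then verifies (Proposition \ref{prp:gam1}) via the SU-word encoding that each of the $n-1-2\rk(m)$ independent local flips (exchanging a letter $l$ for $r$, i.e.\ a down-up step between odd vertices for an up-down step) increases both area and rank by exactly one, yielding the factor $(1+qt)^{n-1-2\rk(m)}$. The step you flag as the heart of the matter is indeed where the paper does its work, and your outline of it is accurate.
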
 

It is easily shown that $\area(p) \geq \rk(p)$, so from Theorem \ref{thm:main} we can write
\begin{equation}\label{eq:qgam}
 \Dy(n;q,t/q) = \sum_{p \in \Dy(n)} q^{\area(p)-\rk(p)}t^{\rk(p)} = \sum_{0\leq j \leq (n-1)/2} \gamma_j(q) t^j(1+t)^{n-1-2j},
\end{equation}
where the $\gamma_j(q)$ count Motzkin paths of rank $j$ according to area. In other words, we have written our polynomial in the ``gamma basis" $\Gamma_{n-1}$ with coefficients in $\mathbb{Z}_{\geq 0}[q]$. This sort of refined $\gamma$-nonnegativity also occurs in the ``cycle-type Eulerian polynomials" of Shareshian and Wachs \cite{SW10}. (The cycle-type Eulerian polynomials are generating functions for the joint distribution of major index and excedances over a set of permutations of fixed cycle type.) We conjecture a new, yet similar $\mathbb{Z}_{\geq 0}[q]$ $\gamma$-nonnegativity in Section \ref{sec:excinv}.

An idea going back to Biane \cite{Biane97} shows how the noncrossing partition lattice can be represented in a natural fashion as a certain partial order on a collection of Catalan-many permutations. This partial order on permutations, called the \emph{absolute order} generalizes to other Coxeter groups, and the interest in the lattice of noncrossing partitions now extends to the lattice of \emph{noncrossing partitions of type $W$}, where $W$ is a finite Coxeter group. A nice overview of the subject is presented in~\cite{A07}.

Thanks to work of Stump, \cite{Stump}, we are able to conclude that the area and rank of a Dyck path correspond to length, $\ell$, and reflection length, $\ell'$, for permutations in the interval $[e,(12\cdots n)]$ of the absolute order on $S_n$.

\begin{thm}[Stump \cite{Stump} Theorem 1.2 (1)]
There exists a bijection $\phi: \Dy(n) \to [e,(12\cdots n)]$ such that for any $p \in \Dy(n)$, \[ \area(p) = \ell( \phi(p)).\]
\end{thm}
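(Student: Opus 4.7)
The plan is to construct $\phi$ as the composition $\phi = \beta^{-1}\circ\psi$, where $\beta\colon [e,(12\cdots n)] \to NC(n)$ is Biane's bijection, sending a permutation to the noncrossing partition formed by its cycles, and $\psi\colon\Dy(n)\to NC(n)$ is Stump's bijection. Since Biane's map and its basic properties are classical, the task reduces to showing that $\area(p) = \ell(\beta^{-1}(\psi(p)))$ for every Dyck path $p$.

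I would first compute $\ell$ explicitly on the image side. A direct inversion count for a single cycle $(j_1\,j_2\cdots j_m)$ with $j_1<\cdots<j_m$ yields $\ell\bigl((j_1\,j_2\cdots j_m)\bigr) = 2(j_m - j_1) - (m-1)$, and the noncrossing property prevents inversions from different blocks of $\pi$ from interacting, so
\[
\ell(\beta^{-1}(\pi)) \;=\; \sum_{B\in\pi}\bigl[2(\max B - \min B) - (|B|-1)\bigr].
\]
I would then proceed by induction on $n$, using the first-return decomposition $p = Up_1Dp_2$ with $p_1\in\Dy(k)$, $p_2\in\Dy(n-1-k)$, for which $\area(p) = \area(p_1) + \area(p_2) + k$ under the usual convention. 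The key combinatorial content is to describe how $\psi$ intertwines this Dyck-path decomposition with the corresponding decomposition of $NC(n)$ obtained by conditioning on the block of $\pi$ containing $1$, and then to verify via the block-sum formula above that the rearrangement contributes exactly $k$ additional inversions, matching the extra area.

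The main obstacle is pinning down the correct recursive description of Stump's $\psi$ and confirming the block-sum bookkeeping (in particular, that a block holding the outermost element contributes exactly $k$ new inversions to the product of disjoint cycles). A cleaner alternative, which proves the theorem at the level of generating functions, is to show that both $\sum_{p\in\Dy(n)}q^{\area(p)}$ and $\sum_{\sigma\in[e,(12\cdots n)]}q^{\ell(\sigma)}$ satisfy the Carlitz--Riordan recurrence~\eqref{eq:area}. The Dyck-path side is classical; for the permutation side, conditioning on the cycle of $\sigma$ containing~$1$ splits the interval into smaller long-cycle intervals of absolute order on complementary arcs, and the block-sum formula yields the required $q^k$ weighting. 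Matching the two recurrences then establishes the distributional identity and hence the existence of a bijection $\phi$ with $\area(p) = \ell(\phi(p))$, even if this indirect route does not single out Stump's $\phi$ canonically.
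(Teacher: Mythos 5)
First, a point of comparison: the paper does not prove this statement at all --- it is quoted from Stump's paper \cite{Stump} and used as a black box --- so your proposal is being judged on its own merits rather than against an internal argument.

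Your reduction is sound as far as it goes, and the one concrete computation you make is correct: the cycle $(j_1\,j_2\cdots j_m)$ with $j_1<\cdots<j_m$ has exactly $2(j_m-j_1)-(m-1)$ inversions, and for a noncrossing partition the inversions of the product of the block-cycles do decompose block by block, so $\ell(\beta^{-1}(\pi))=\sum_{B\in\pi}\bigl[2(\max B-\min B)-(|B|-1)\bigr]$ (equivalently, each arc $(a,b)$ of the arc diagram contributes $2(b-a)-1$). The gap is that everything after this formula is a plan rather than a proof, and you flag this yourself. In the bijective route you never establish the two facts that carry all the content: (i) that splitting $p$ at its first return corresponds under Stump's map to splitting $\pi$ at $j=\max(\text{block containing }1)$, which separates $\pi$ into a noncrossing partition of $[j]$ with $1\sim j$ and an arbitrary one of $\{j+1,\dots,n\}$; and (ii) that the resulting ``raising'' bijection from $NC(k)$ onto $\{\pi'\in NC(k+1):1\sim k+1\}$ increases the block-sum statistic by exactly $k$. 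Item (ii) is the theorem in disguise and is not obvious; the paper's own Proposition \ref{prp:decomp} does the analogous bookkeeping for $\area$ and $\rk$ via SU-words, and even that requires a nontrivial transition table. Your ``cleaner'' generating-function alternative needs exactly the same verification (ii) to produce the factor $q^k$ in the Carlitz--Riordan recurrence, so it relocates the difficulty rather than removing it. One further caution: the generating-function route yields only equidistribution, hence the existence of \emph{some} area-to-length bijection. That satisfies the literal existential statement, but the paper immediately uses that $\phi$ is in addition rank-preserving, i.e.\ $\rk(p)=\ell'(\phi(p))$, to obtain Corollary \ref{cor:ll'}; only your first route, with $\phi=\beta^{-1}\circ\psi$ a composition of poset isomorphisms, delivers that stronger conclusion.
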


As rank in $[e,(12\cdots n)]$ is given by reflection length and $\phi$ is rank-preserving, we immediately get the following.

\begin{cor}[Length and reflection length]\label{cor:ll'} 
For all $n\geq 1$,
\[ \Dy(n;q,t) = \sum_{\sigma \in [e,(12\cdots n)]} q^{\ell(\sigma)}t^{\ell'(\sigma)}.\]
\end{cor}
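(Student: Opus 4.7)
The plan is that this corollary is essentially immediate from the preceding theorem, and the only content to verify is that the bijection $\phi$ from Stump matches the rank of a Dyck path (as defined here, via transfer from the noncrossing partition lattice) with reflection length in $[e,(12\cdots n)]$. Since the theorem already gives $\area(p)=\ell(\phi(p))$, once we know $\rk(p)=\ell'(\phi(p))$ we just rewrite
\[ \Dy(n;q,t) = \sum_{p\in\Dy(n)} q^{\area(p)}t^{\rk(p)} = \sum_{p\in\Dy(n)} q^{\ell(\phi(p))}t^{\ell'(\phi(p))} = \sum_{\sigma\in[e,(12\cdots n)]} q^{\ell(\sigma)}t^{\ell'(\sigma)},\]
using that $\phi$ is a bijection.

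To justify $\rk(p)=\ell'(\phi(p))$, I would chain three well-known facts. First, by the definition of rank used in the paper, $\rk(p)=n-b(\pi)$, where $\pi\in NC(n)$ is the noncrossing partition associated to $p$ and $b(\pi)$ is its number of blocks. Second, Biane's correspondence identifies $\pi$ with a permutation $\sigma\in[e,(12\cdots n)]$ whose cycles (as unordered sets) are exactly the blocks of $\pi$; in particular $\sigma$ has $b(\pi)$ cycles. Third, a standard fact about absolute order on $S_n$ says that for any $\sigma\in S_n$, the reflection length is $\ell'(\sigma)=n-\cyc(\sigma)$, since the reflections are the transpositions and each transposition changes the cycle count by one. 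Composing, $\rk(p)=n-b(\pi)=n-\cyc(\sigma)=\ell'(\sigma)$.

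The one point that needs the right bookkeeping, rather than fresh argument, is that Stump's $\phi$ actually agrees with the composition $\Dy(n)\to NC(n)\to[e,(12\cdots n)]$ coming from the paper's chosen $\Dy(n)\leftrightarrow NC(n)$ bijection and Biane's map; this is built into the setup the authors cite (and is the reason for the earlier remark pointing out which $\Dy(n)\leftrightarrow NC(n)$ bijection is in force). Given that compatibility, the rank-preservation of $\phi$ is really a rank-preservation statement about Biane's bijection between $NC(n)$ and $[e,(12\cdots n)]$, which is the only substantive — though entirely standard — step in the argument.
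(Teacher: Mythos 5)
Your proposal is correct and follows essentially the same route as the paper, which simply observes that $\phi$ is rank-preserving and that rank in $[e,(12\cdots n)]$ equals reflection length; you have merely unpacked the rank-preservation claim via Biane's block-to-cycle correspondence and the identity $\ell'(\sigma)=n-\cyc(\sigma)$. No gaps.
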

Thus, all the enumerative results we provide for area and rank of Dyck paths can be recast in terms of length and reflection length for these permutations. We remark that the joint distribution of length and reflection length in a Coxeter group has not been greatly investigated, though some attempt has been made at linking the two statistics. In particular, Edelman \cite{Edelman87} has studied the joint distribution of cycles with inversions in the symmetric group, obtaining some partial results. See also \cite{PetersenSorting}.

In Section \ref{sec:BKS}, we discuss two other bijections (due to Bandlow and Killpatrick \cite{BK01} and another due to Stump \cite{Stump}) for which area under a path corresponds to length of a permutation.

\begin{rmk}
Biane's correspondence works equally well for any choice of $n$-cycle $c$, i.e., as lattices $[e,c] \cong [e,(12\cdots n)]$. Since reflection length corresponds to rank, this implies that \[\sum_{\sigma \in [e,c]} t^{\ell'(\sigma)} = \sum_{\sigma \in [e,(12\cdots n)] } t^{\ell'(\sigma)}.\] However, it is \emph{not} generally true that length has the same distribution over two such intervals. In particular, the long element $\sigma_0 = n(n-1)\cdots 321$ (written in one-line notation) lies in $[e,(12\cdots n)]$ but not always in $[e,c]$. For example, $\sigma_0 = 4321$ does not lie in the interval $[e,(1243)]$. Thus, Corollary \ref{cor:ll'} is quite sensitive to our choice of $n$-cycle. 
\end{rmk}

We finish with Section \ref{sec:B}, in which we  show that the lattice of $B_n$-noncrossing partitions admits a symmetric boolean decomposition similar to Simion and Ullman's in the case of the symmetric group. (This was proved by Hersh using indirect means in \cite{H99}. An inductive proof is in \cite{PetersenShards}.) We ask whether the $W$-noncrossing partition lattice has such a decomposition for all $W$. Their rank generating functions are $\gamma$-nonnegative, so such a decomposition is possible.

\section{Area and rank of a Dyck path}\label{sec:stats}

In this section we establish a fundamental bijection between noncrossing partitions and Dyck paths that allows us to study the polynomials $\Dy(n;q,t)$.

\subsection{Noncrossing partitions and Dyck paths}\label{sec:noncrossingdyck}

Two pairs of numbers, say $\{a < b\}$ and $\{a' < b'\}$ are \emph{crossing} if either
\begin{itemize}
\item $a < a' < b < b'$ or
\item $a' < a < b' < b$.
\end{itemize}
More generally, if two disjoint sets $B$ and $B'$ contain pairs $\{ a < b\} \subseteq B$ and $\{a' < b'\} \subseteq B'$ that are crossing, we say $B$ and $B'$ are crossing. Otherwise, $B$ and $B'$ are \emph{noncrossing}. 

A \emph{noncrossing partition} of $[n]:=\{1,2,\ldots,n\}$, $\pi = \{ B_1, B_2,\ldots, B_k\}$ is a set partition of $[n]$ such that the blocks $B_i$ are pairwise noncrossing. We draw the \emph{arc diagram} of a noncrossing partition as a sequence of dots labeled $1, 2, \ldots, n$, with arcs connecting consecutive elements in the same block. For example, if $\pi = \{ \{1,8\}, \{2,4,7\}, \{3\}, \{5\}, \{6\} \}$, we draw \[ \begin{xy}0;<1cm,0cm>:
(1,0)*{\bullet}="1", (2,0)*{\bullet}="2", (3,0)*{\bullet}="3", (4,0)*{\bullet}="4", (5,0)*{\bullet}="5", (6,0)*{\bullet}="6", (7,0)*{\bullet}="7", (8,0)*{\bullet}="8", (1,-.5)*{1}, (2,-.5)*{2}, (3,-.5)*{3}, (4,-.5)*{4}, (5,-.5)*{5}, (6,-.5)*{6}, (7,-.5)*{7}, (8,-.5)*{8},  {\ar @{-} @/^60pt/ "1"; "8" }, {\ar @{-} @/^20pt/ "2"; "4" }, {\ar @{-} @/^30pt/ "4"; "7" }
\end{xy}
\]
Let $NC(n)$ denote the set of all noncrossing partitions of $[n]$. We partially order this set by reverse refinement, so the set of all singletons is the minimum element, and $[n]$ itself is the maximum element. 

Our first task is to understand $(NC(n),\leq)$ in terms of Dyck paths.

\subsection{Stump's bijection $\phi$}

A \emph{Dyck path} of order $n$ (length $2n$) is a lattice path starting at $(0,0)$ and ending at $(2n,0)$, consisting of $n$ steps ``up" from $(i,j)$ to $(i+1,j+1)$ and $n$ steps ``down" from $(i,j)$ to $(i+1,j-1)$, such that the path never crosses below the $x$-axis. For example, the solid line in Figure \ref{fig:dyckex} is a Dyck path of rank $8$. (The numbers and other decorations will be explained shortly).
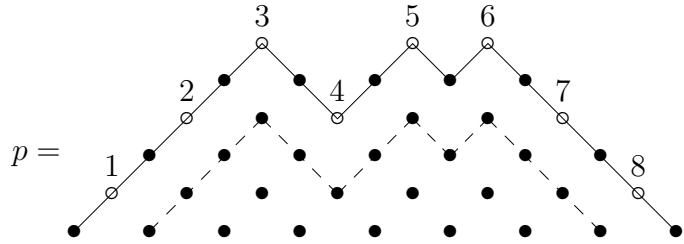
\begin{figure}[h]
\[
\begin{xy}0;<.5cm,0cm>:
(-1,2)*{p=}, (5,5); (0,0) **@{-}, (7,3) **@{-}, (9,5); (7,3) **@{-}, (10,4) **@{-}, (11,5); (10,4) **@{-}, (16,0) **@{-}, (5,3); (2,0) **@{--}, (7,1) **@{--},  (9,3); (7,1) **@{--}, (10,2) **@{--}, (11,3); (10,2) **@{--}, (14,0) **@{--}, (0,0)*{\bullet}, (1,1)*{\circ}, (2,2)*{\bullet}, (3,3)*{\circ}, (4,4)*{\bullet}, (5,5)*{\circ}, (6,4)*{\bullet}, (7,3)*{\circ}, (8,4)*{\bullet}, (9,5)*{\circ}, (10,4)*{\bullet}, (11,5)*{\circ}, (12,4)*{\bullet}, (13,3)*{\circ}, (14,2)*{\bullet}, (15,1)*{\circ}, (16,0)*{\bullet}, (2,0)*{\bullet}, (3,1)*{\bullet}, (4,2)*{\bullet}, (5,3)*{\bullet}, (4,0)*{\bullet}, (5,1)*{\bullet}, (6,2)*{\bullet}, (6,0)*{\bullet}, (7,1)*{\bullet}, (8,2)*{\bullet}, (9,3)*{\bullet}, (8,0)*{\bullet}, (9,1)*{\bullet}, (10,2)*{\bullet}, (11,3)*{\bullet}, (10,0)*{\bullet}, (11,1)*{\bullet}, (12,2)*{\bullet}, (12,0)*{\bullet}, (13,1)*{\bullet}, (14,0)*{\bullet}, (1,1.75)*{1}, (3,3.75)*{2}, (5,5.75)*{3}, (7,3.75)*{4}, (9,5.75)*{5}, (11,5.75)*{6}, (13,3.75)*{7}, (15,1.75)*{8}
\end{xy}
\]
\caption{A Dyck path $p$, with labels to illustrate the bijection $\phi: \Dy(n) \to NC(n)$.}\label{fig:dyckex}
\end{figure}

Let $\Dy(n)$ denote the set of all Dyck paths of order $n$. We now describe a bijection due to Stump \cite{Stump}, between $\Dy(n)$ and $NC(n)$, from which we will inherit a partial order on $\Dy(n)$. That is, we will declare $p \leq q$ in $\Dy(n)$ if and only if $\phi(p) \leq \phi(q)$ in $NC(n)$.

We now describe Stump's bijection $\phi: \Dy(n) \to NC(n)$. 

Consider the ``odd" points of the path $p$, i.e., those points of the form $(2i-1,2j-1)$ for $1\leq i,j \leq n$. In Figure \ref{fig:dyckex}, we see these vertices in white. We form the partition $\phi(p) = \pi \in NC(n)$ by declaring that $i$ and $i'$ are in the same block of $\pi$ if and only if: 
\begin{enumerate}
\item there exists a $j$ such that the points $(2i-1,2j-1)$ and $(2i'-1,2j-1)$ are on the path $p$, and
\item the segment of $p$ between $(2i-1,2j-1)$ and $(2i'-1,2j-1)$ does not pass below the line $y=2j-1$.
\end{enumerate}

For example, with the path $p$ drawn in Figure \ref{fig:dyckex}, we get \[\phi(p) = \{ \{1,8\}, \{2,4,7\}, \{3\}, \{5\}, \{6\}\}.\] Indeed, we see that the white vertices labeled 1 and 8 both sit at height $y=1$ and are connected by a segment of $p$ not passing below $y=1$. Similarly, 2, 4, and 7 are all at height $y=3$ and connected by a segment not passing below $y=3$. The vertices 3, 5, and 6 are at the same height, yet they are only connected by segments of $p$ that go below $y=5$. 

We now remark on another way to visualize the bijection $\phi$ via drawing a set of maximally nested Dyck paths below the path $p$.

For example, consider the path $p$ in Figure \ref{fig:dyckex}. Beneath this path we can draw another Dyck path that starts at $(2,0)$ and goes as close to the original Dyck path as possible. This is indicated with the dashed lines. We can continue to draw maximally nested Dyck paths (starting at the leftmost unused lattice point) until all the vertices below the original path have been used. Since there can be at most one maximal path beneath a given Dyck path starting at the leftmost unused lattice point, the collection of nested paths obtained from a given outer Dyck path is unique. Starting with the Dyck path in Figure \ref{fig:dyckex} we get the following collection of nested paths: 
\[
\begin{xy}0;<.5cm,0cm>:
(5,5); (0,0) **@{-}, (7,3) **@{-}, (9,5); (7,3) **@{-}, (10,4) **@{-}, (11,5); (10,4) **@{-}, (16,0) **@{-}, (5,3); (2,0) **@{-}, (7,1) **@{-},  (9,3); (7,1) **@{-}, (10,2) **@{-}, (11,3); (10,2) **@{-}, (14,0) **@{-},
(5,1); (4,0) **@{-}, (6,0) **@{-}, (9,1); (8,0) **@{-}, (10,0) **@{-}, (11,1); (10,0) **@{-}, (12,0) **@{-}, (0,0)*{\bullet}, (1,1)*{\circ}, (2,2)*{\bullet}, (3,3)*{\bullet}, (4,4)*{\bullet}, (5,5)*{\bullet}, (6,4)*{\bullet}, (7,3)*{\bullet}, (8,4)*{\bullet}, (9,5)*{\bullet}, (10,4)*{\bullet}, (11,5)*{\bullet}, (12,4)*{\bullet}, (13,3)*{\bullet}, (14,2)*{\bullet}, (15,1)*{\circ}, (16,0)*{\bullet}, (2,0)*{\bullet}, (3,1)*{\circ}, (4,2)*{\bullet}, (5,3)*{\bullet}, (4,0)*{\bullet}, (5,1)*{\circ}, (6,2)*{\bullet}, (6,0)*{\bullet}, (7,1)*{\circ}, (8,2)*{\bullet}, (9,3)*{\bullet}, (8,0)*{\bullet}, (9,1)*{\circ}, (10,2)*{\bullet}, (11,3)*{\bullet}, (10,0)*{\bullet}, (11,1)*{\circ}, (12,2)*{\bullet}, (12,0)*{\bullet}, (13,1)*{\circ}, (14,0)*{\bullet},
(1,0)*{1}, (3,0)*{2}, (5,0)*{3}, (7,0)*{4}, (9,0)*{5}, (11,0)*{6}, (13,0)*{7}, (15,0)*{8} 
\end{xy}
\]
The paths that we have drawn are nothing but the segments of $p$ lying above heights $y=0$, $y=2$, $y=4$, and so on. In a picture of nested Dyck paths of order $n$ (i.e., with the outer path in $\Dy(n)$) there are $n$ lattice points of height 1: i.e., $(1,1), (3,1), \ldots, (2n-1,1)$. If we associate to $(2i-1)$ the element $i$, the nested Dyck paths give us a noncrossing partition of $n$. (The parts of the paths staying on or above level 1 are arcs in the arc diagram of the partition.) This process is easily reversed, so that, given a string diagram we can draw a set of nested Dyck paths. In Figure \ref{fig:NC4}, we have drawn the poset $(NC(4),\leq) \cong (\Dy(4),\leq)$. Notice that the rank of a noncrossing partition corresponds to $n$ minus the number of blocks, or to one less than the number of nested Dyck paths.

Let $\rk(p)$ denote the rank of the path $p$ in the lattice $(\Dy(n),\leq)$.

\begin{rmk}
There is a bijection $\alpha$ between Dyck paths of order $n$ and perfect \emph{noncrossing matchings} of $[2n]$ given by labeling the edges of a Dyck path with $\{1,2,\ldots,2n\}$ and then connecting $1\leq i,j\leq 2n$ if they can ``see'' each other horizontally (see~\cite[Section 2.1]{BDPR10}). Furthermore, there exists a bijection $\beta$ between noncrossing matchings of $[2n]$ and noncrossing partitions of $[n]$ given by identifying all pairs of consecutive elements $2i-1$ and $2i$ in a noncrossing matching. See, e.g.,~\cite{AST11, H06}. The bijection $\phi$ can be seen as the composition $\beta\circ\alpha$. We thank Drew Armstrong for bringing this fact to our attention. 
\end{rmk}

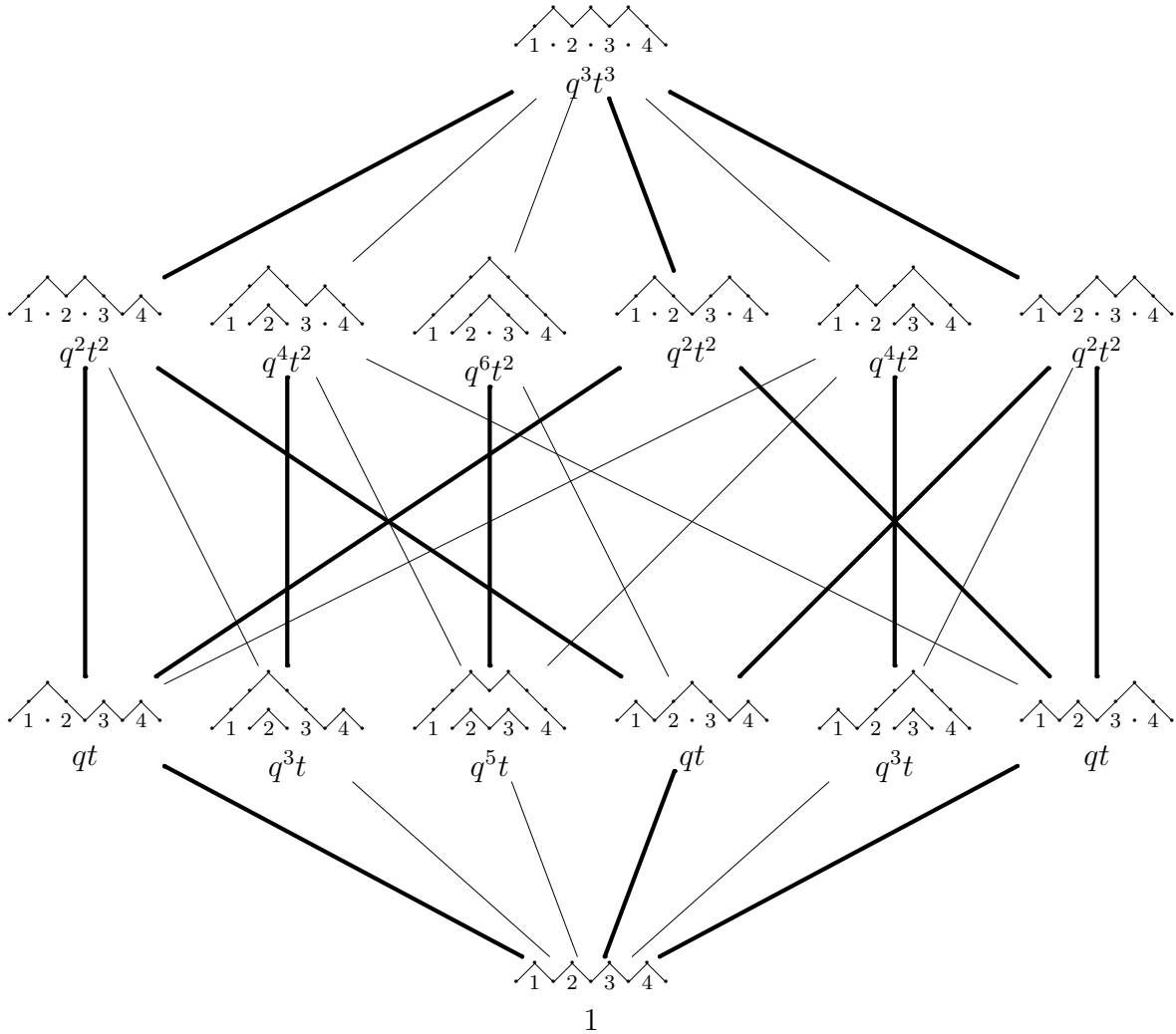
\begin{figure}[h]
\[
\begin{xy}0;<.9cm,0cm>:
(0,0)*{\begin{xy}0;<.25cm,0cm>:
(1,1); (0,0) **@{-}, (2,0) **@{-}, (3,1); (2,0) **@{-}, (4,0) **@{-}, (5,1); (4,0) **@{-}, (6,0) **@{-}, (7,1); (6,0) **@{-}, (8,0) **@{-}, 
(0,0)*{\cdot}, (2,0)*{\cdot}, (4,0)*{\cdot}, (6,0)*{\cdot}, (8,0)*{\cdot}, (1,1)*{\cdot}, (3,1)*{\cdot}, (5,1)*{\cdot}, (7,1)*{\cdot},
(1,0)*{\mbox{\tiny 1}}, (3,0)*{\mbox{\tiny 2}}, (5,0)*{\mbox{\tiny 3}}, (7,0)*{\mbox{\tiny 4}}, (4,-2)*{1}
\end{xy}}="a",
(-7.5,4)*{\begin{xy}0;<.25cm,0cm>:
(2,2); (0,0) **@{-}, (4,0) **@{-}, (5,1); (4,0) **@{-}, (6,0) **@{-}, (7,1); (6,0) **@{-}, (8,0) **@{-}, 
(0,0)*{\cdot}, (2,0)*{\cdot}, (4,0)*{\cdot}, (6,0)*{\cdot}, (8,0)*{\cdot}, (1,1)*{\cdot}, (3,1)*{\cdot}, (5,1)*{\cdot}, (7,1)*{\cdot}, (2,2)*{\cdot},
(1,0)*{\mbox{\tiny 1}}, (3,0)*{\mbox{\tiny 2}}, (5,0)*{\mbox{\tiny 3}}, (7,0)*{\mbox{\tiny 4}}, (4,-2)*{qt}
\end{xy}}="b1",
(-4.5,4)*{\begin{xy}0;<.25cm,0cm>:
(3,3); (0,0) **@{-}, (6,0) **@{-}, (3,1); (2,0) **@{-}, (4,0) **@{-}, (7,1); (6,0) **@{-}, (8,0) **@{-}, 
(0,0)*{\cdot}, (2,0)*{\cdot}, (4,0)*{\cdot}, (6,0)*{\cdot}, (8,0)*{\cdot}, (1,1)*{\cdot}, (3,1)*{\cdot}, (5,1)*{\cdot}, (7,1)*{\cdot}, (2,2)*{\cdot}, (4,2)*{\cdot}, (3,3)*{\cdot},
(1,0)*{\mbox{\tiny 1}}, (3,0)*{\mbox{\tiny 2}}, (5,0)*{\mbox{\tiny 3}}, (7,0)*{\mbox{\tiny 4}},
 (4,-2)*{q^3t}
\end{xy}}="b2",
(-1.5,4)*{\begin{xy}0;<.25cm,0cm>:
(3,3); (0,0) **@{-}, (4,2) **@{-}, (3,1); (2,0) **@{-}, (4,0) **@{-}, (5,1); (4,0) **@{-}, (6,0) **@{-}, (5,3); (4,2) **@{-}, (8,0) **@{-}, 
(0,0)*{\cdot}, (2,0)*{\cdot}, (4,0)*{\cdot}, (6,0)*{\cdot}, (8,0)*{\cdot}, (1,1)*{\cdot}, (3,1)*{\cdot}, (5,1)*{\cdot}, (7,1)*{\cdot}, (2,2)*{\cdot}, (4,2)*{\cdot}, (6,2)*{\cdot}, (3,3)*{\cdot}, (5,3)*{\cdot},
(1,0)*{\mbox{\tiny 1}}, (3,0)*{\mbox{\tiny 2}}, (5,0)*{\mbox{\tiny 3}}, (7,0)*{\mbox{\tiny 4}}, (4,-2)*{q^5t}
\end{xy}}="b3",
(1.5,4)*{\begin{xy}0;<.25cm,0cm>:
(1,1); (0,0) **@{-}, (2,0) **@{-}, (4,2); (2,0) **@{-}, (6,0) **@{-}, (7,1); (6,0) **@{-}, (8,0) **@{-}, 
(0,0)*{\cdot}, (2,0)*{\cdot}, (4,0)*{\cdot}, (6,0)*{\cdot}, (8,0)*{\cdot}, (1,1)*{\cdot}, (3,1)*{\cdot}, (5,1)*{\cdot}, (7,1)*{\cdot}, (4,2)*{\cdot},
(1,0)*{\mbox{\tiny 1}}, (3,0)*{\mbox{\tiny 2}}, (5,0)*{\mbox{\tiny 3}}, (7,0)*{\mbox{\tiny 4}}, (4,-2)*{qt}
\end{xy}}="b4",
(4.5,4)*{\begin{xy}0;<.25cm,0cm>:
(1,1); (0,0) **@{-}, (2,0) **@{-}, (5,3); (2,0) **@{-}, (8,0) **@{-}, (5,1); (4,0) **@{-}, (6,0) **@{-}, 
(0,0)*{\cdot}, (2,0)*{\cdot}, (4,0)*{\cdot}, (6,0)*{\cdot}, (8,0)*{\cdot}, (1,1)*{\cdot}, (3,1)*{\cdot}, (5,1)*{\cdot}, (7,1)*{\cdot},
(4,2)*{\cdot}, (5,3)*{\cdot}, (6,2)*{\cdot},
(1,0)*{\mbox{\tiny 1}}, (3,0)*{\mbox{\tiny 2}}, (5,0)*{\mbox{\tiny 3}}, (7,0)*{\mbox{\tiny 4}}, (4,-2)*{q^3t}
\end{xy}}="b5",
(7.5,4)*{\begin{xy}0;<.25cm,0cm>:
(1,1); (0,0) **@{-}, (2,0) **@{-}, (3,1); (2,0) **@{-}, (4,0) **@{-}, (6,2); (4,0) **@{-}, (8,0) **@{-}, 
(0,0)*{\cdot}, (2,0)*{\cdot}, (4,0)*{\cdot}, (6,0)*{\cdot}, (8,0)*{\cdot}, (1,1)*{\cdot}, (3,1)*{\cdot}, (5,1)*{\cdot}, (7,1)*{\cdot}, (6,2)*{\cdot},
(1,0)*{\mbox{\tiny 1}}, (3,0)*{\mbox{\tiny 2}}, (5,0)*{\mbox{\tiny 3}}, (7,0)*{\mbox{\tiny 4}}, (4,-2)*{qt}
\end{xy}}="b6",
(-7.5,10)*{\begin{xy}0;<.25cm,0cm>:
(2,2); (0,0) **@{-}, (3,1) **@{-}, (4,2); (3,1) **@{-}, (6,0) **@{-}, (7,1); (6,0) **@{-}, (8,0) **@{-}, 
(0,0)*{\cdot}, (2,0)*{\cdot}, (4,0)*{\cdot}, (6,0)*{\cdot}, (8,0)*{\cdot}, (1,1)*{\cdot}, (3,1)*{\cdot}, (5,1)*{\cdot}, (7,1)*{\cdot}, (2,2)*{\cdot}, (4,2)*{\cdot},
(1,0)*{\mbox{\tiny 1}}, (3,0)*{\mbox{\tiny 2}}, (5,0)*{\mbox{\tiny 3}}, (7,0)*{\mbox{\tiny 4}}, (4,-2)*{q^2t^2}
\end{xy}}="c1",
(4.5,10)*{\begin{xy}0;<.25cm,0cm>:
(2,2); (0,0) **@{-}, (3,1) **@{-}, (5,3); (3,1) **@{-}, (8,0) **@{-}, (5,1); (4,0) **@{-}, (6,0) **@{-}, 
(0,0)*{\cdot}, (2,0)*{\cdot}, (4,0)*{\cdot}, (6,0)*{\cdot}, (8,0)*{\cdot}, (1,1)*{\cdot}, (3,1)*{\cdot}, (5,1)*{\cdot}, (7,1)*{\cdot}, (2,2)*{\cdot}, (4,2)*{\cdot}, (5,3)*{\cdot},
(1,0)*{\mbox{\tiny 1}}, (3,0)*{\mbox{\tiny 2}}, (5,0)*{\mbox{\tiny 3}}, (7,0)*{\mbox{\tiny 4}}, (4,-2)*{q^4t^2}
\end{xy}}="c2",
(-4.5,10)*{\begin{xy}0;<.25cm,0cm>:
(3,3); (0,0) **@{-}, (5,1) **@{-}, (3,1); (2,0) **@{-}, (4,0) **@{-}, (6,2); (5,1) **@{-}, (8,0) **@{-},  
(0,0)*{\cdot}, (2,0)*{\cdot}, (4,0)*{\cdot}, (6,0)*{\cdot}, (8,0)*{\cdot}, (1,1)*{\cdot}, (3,1)*{\cdot}, (5,1)*{\cdot}, (7,1)*{\cdot}, (2,2)*{\cdot}, (4,2)*{\cdot}, (6,2)*{\cdot}, (3,3)*{\cdot}, 
(1,0)*{\mbox{\tiny 1}}, (3,0)*{\mbox{\tiny 2}}, (5,0)*{\mbox{\tiny 3}}, (7,0)*{\mbox{\tiny 4}}, (4,-2)*{q^4t^2}
\end{xy}}="c3",
(1.5,10)*{\begin{xy}0;<.25cm,0cm>:
(2,2); (0,0) **@{-}, (4,0) **@{-}, (6,2); (4,0) **@{-}, (8,0) **@{-}, (0,0)*{\cdot}, (2,0)*{\cdot}, (4,0)*{\cdot}, (6,0)*{\cdot}, (8,0)*{\cdot}, (1,1)*{\cdot}, (3,1)*{\cdot}, (5,1)*{\cdot}, (7,1)*{\cdot}, (2,2)*{\cdot}, (6,2)*{\cdot},
(1,0)*{\mbox{\tiny 1}}, (3,0)*{\mbox{\tiny 2}}, (5,0)*{\mbox{\tiny 3}}, (7,0)*{\mbox{\tiny 4}}, (4,-2)*{q^2t^2}
\end{xy}}="c4",
(-1.5,10)*{\begin{xy}0;<.25cm,0cm>:
(4,4); (0,0) **@{-}, (8,0) **@{-}, (4,2); (2,0) **@{-}, (6,0) **@{-}, (0,0)*{\cdot}, (2,0)*{\cdot}, (4,0)*{\cdot}, (6,0)*{\cdot}, (8,0)*{\cdot}, (1,1)*{\cdot}, (3,1)*{\cdot}, (5,1)*{\cdot}, (7,1)*{\cdot}, (4,4)*{\cdot},
(4,2)*{\cdot}, (5,3)*{\cdot}, (6,2)*{\cdot}, (3,3)*{\cdot}, (2,2)*{\cdot},
(1,0)*{\mbox{\tiny 1}}, (3,0)*{\mbox{\tiny 2}}, (5,0)*{\mbox{\tiny 3}}, (7,0)*{\mbox{\tiny 4}}, (4,-2)*{q^6t^2}
\end{xy}}="c5",
(7.5,10)*{\begin{xy}0;<.25cm,0cm>:
(1,1); (0,0) **@{-}, (2,0) **@{-}, (4,2); (2,0) **@{-}, (5,1) **@{-}, (6,2); (5,1) **@{-}, (8,0) **@{-}, 
(0,0)*{\cdot}, (2,0)*{\cdot}, (4,0)*{\cdot}, (6,0)*{\cdot}, (8,0)*{\cdot}, (1,1)*{\cdot}, (3,1)*{\cdot}, (5,1)*{\cdot}, (7,1)*{\cdot}, (6,2)*{\cdot}, (4,2)*{\cdot},
(1,0)*{\mbox{\tiny 1}}, (3,0)*{\mbox{\tiny 2}}, (5,0)*{\mbox{\tiny 3}}, (7,0)*{\mbox{\tiny 4}}, (4,-2)*{q^2t^2}
\end{xy}}="c6",
(0,14)*{\begin{xy}0;<.25cm,0cm>:
(2,2); (0,0) **@{-}, (3,1) **@{-}, (4,2); (3,1) **@{-}, (5,1) **@{-}, (6,2); (5,1) **@{-}, (8,0) **@{-}, 
(0,0)*{\cdot}, (2,0)*{\cdot}, (4,0)*{\cdot}, (6,0)*{\cdot}, (8,0)*{\cdot}, (1,1)*{\cdot}, (3,1)*{\cdot}, (5,1)*{\cdot}, (7,1)*{\cdot}, (6,2)*{\cdot}, (2,2)*{\cdot}, (4,2)*{\cdot},
(1,0)*{\mbox{\tiny 1}}, (3,0)*{\mbox{\tiny 2}}, (5,0)*{\mbox{\tiny 3}}, (7,0)*{\mbox{\tiny 4}}, (4,-2)*{q^3t^3}
\end{xy}}="d",
"a"; {\ar @[|(4)] @{-}, "a"; "b1"}, "b2" **@{-}, "b3" **@{-}, {\ar @[|(4)] @{-}, "a"; "b4"}, "b5" **@{-}, {\ar @[|(4)] @{-}, "a"; "b6"},
"b1"; {\ar @[|(4)] @{-}, "b1"; "c1"}, "c2" **@{-},  {\ar @[|(4)] @{-}, "b1"; "c4"}, 
"b2"; "c1" **@{-}, {\ar @[|(4)] @{-}, "b2"; "c3"},
"b3"; "c2" **@{-}, "c3" **@{-}, {\ar @[|(4)] @{-}, "b3"; "c5"},
"b4"; {\ar @[|(4)] @{-}, "b4"; "c1"}, "c5" **@{-}, {\ar @[|(4)] @{-}, "b4"; "c6"},
"b5"; {\ar @[|(4)] @{-}, "b5"; "c2"}, "c6" **@{-},
"b6"; "c3" **@{-}, {\ar @[|(4)] @{-}, "b6"; "c4"}, {\ar @[|(4)] @{-}, "b6"; "c6"},
"d"; {\ar @[|(4)] @{-}, "d"; "c1"}, {\ar @[|(4)] @{-}, "d"; "c4"}, {\ar @[|(4)] @{-}, "d"; "c6"}, "c2" **@{-}, "c3" **@{-}, "c5" **@{-}
\end{xy}
\]
\caption{The noncrossing partitions of $\{1,2,3,4\}$, drawn as nested Dyck paths. Weights are given with respect to area and rank. Boldface lines indicate Simion and Ullmann's symmetric boolean decomposition of the poset.}\label{fig:NC4}
\end{figure}

Apart from the rank of a Dyck path in $(\Dy(n),\leq)$, we will keep track of another statistic. We define the \emph{area of $p$}, denoted $\area(p)$, to be the number of unused lattice points below the path and on or above the line $y=0$. So, for example, we have $\area(p) = 8$ for the following path:
 \[ \begin{xy}0;<.5cm,0cm>:
(-1,2)*{p=}; (3,3); (0,0) **@{-}, (4,2) **@{-}, (6,4); (4,2) **@{-}, (10,0) **@{-}, (2,0); (1,1) **@{--}, (4,2) **@{--}, (4,0); (2,2) **@{--}, (7,3) **@{--}, (6,0); (4,2) **@{--}, (8,2) **@{--}, (8,0); (5,3) **@{--}, (9,1) **@{--}, (0,0)*{\bullet}, (1,1)*{\bullet}, (2,2)*{\bullet}, (3,3)*{\bullet},  (6,4)*{\bullet}, (2,0)*{\bullet}, (3,1)*{\bullet}, (4,2)*{\bullet}, (5,3)*{\bullet}, (4,0)*{\bullet}, (5,1)*{\bullet}, (6,2)*{\bullet}, (6,0)*{\bullet}, (7,1)*{\bullet}, (8,2)*{\bullet}, (8,0)*{\bullet}, (9,1)*{\bullet}, (10,0)*{\bullet}, (7,3)*{\bullet}
\end{xy}
\]
We use the term ``area" because this is the number of $1/\sqrt{2}\times 1/\sqrt{2}$ diamonds that can fit below the path and above the $x$-axis. (Just put the bottom of the diamonds on the unused lattice points.) This is indicated with dashed lines in the picture above. The distribution of area for Dyck paths has has been studied by many, starting with Carlitz and Riordan \cite{CR64}.

Define the following bivariate generating function for area and rank of Dyck paths:
\[ \Dy(n;q,t) = \sum_{p \in \Dy(n)} q^{\area(p)}t^{\rk(p)}.\]

For example, with $n=4$ and $n=5$ we have:
\begin{align} 
 \Dy(4;q,t) &=  1 + 3qt + 2q^3t + q^5t + 3q^2t^2 + 2q^4t^2 + q^6t^2 + q^3t^3 \nonumber \\
 &= (1+qt)^{3}+2q^{3}t(1+qt)+q^{5}t(1+qt)\notag\\
 &= (1+qt)^3 + (2q^3+q^5)t(1+qt) \label{eq:D4}\\
 \Dy(5;q,t) &=1 + 4qt + 6q^2t^2 + 3q^3t + 2q^5t + q^7t + 6q^4t^2 + 5q^6t^2 \nonumber\\
 &\qquad + q^8t^2 + q^{10}t^2 + 4q^3t^3 + 3q^5t^3 + 2q^7t^3 + q^9t^3+ q^4t^4  \nonumber \\
 &=(1+qt)^{4}+3q^{3}t(1+qt)^{2} + 2q^5t(1+qt)^2 + q^7t(1+qt)^2 + q^6t^2 + q^{10}t^2 \notag \\
 &=  (1+qt)^4 + (3q^3 + 2q^5+q^7)t(1+qt)^2 + (q^6 + q^{10})t^2 \label{eq:D5}
  \end{align}

The lovely symmetry of these polynomials is perhaps better seen in the following arrays of their coefficients (with $\cdot$ instead of 0):
\[ n=4: \begin{array}{c | c c c c c c c}
t \backslash q & 0 & 1 & 2 & 3 & 4 & 5 & 6 \\
\hline 
0 & 1 & \cdot & \cdot & \cdot & \cdot & \cdot & \cdot \\
1 & \cdot & 3 & \cdot & 2 & \cdot & 1 & \cdot \\
2 & \cdot & \cdot & 3 & \cdot & 2 & \cdot & 1 \\
3 & \cdot & \cdot & \cdot & 1 & \cdot & \cdot & \cdot
\end{array}
\qquad n=5:  \begin{array}{c | c c c c c c c c c c c }
 t \backslash q & 0 & 1 & 2 & 3 & 4 & 5 & 6 & 7 & 8 & 9 & 10\\
\hline 
0 & 1 & \cdot & \cdot & \cdot & \cdot & \cdot & \cdot & \cdot & \cdot & \cdot & \cdot \\
1 & \cdot & 4 & \cdot & 3 & \cdot & 2 &\cdot & 1 & \cdot & \cdot & \cdot\\
2 & \cdot & \cdot & 6 & \cdot & 6 & \cdot & 5 & \cdot & 2 & \cdot & 1 \\
3 & \cdot & \cdot & \cdot & 4 & \cdot & 3 & \cdot & 2 & \cdot & 1 & \cdot\\
4 & \cdot & \cdot & \cdot & \cdot & 1 & \cdot & \cdot & \cdot & \cdot & \cdot & \cdot
\end{array}
\]

We will now discuss Motzkin paths and explain why we are able to express polynomials $\Dy(n;q,t)$ as $\mathbb{Z}[q]$-positive sums of  terms of the form $t^j(1+qt)^{n-1-2j}$.

\subsection{Motzkin paths}

Let $\M(n)$ denote the set of \emph{Motzkin paths} of order $n$. These are paths from $(1,1)$ to $(2n-1,1)$ that never pass below the line $y=1$, while taking steps ``up" from $(i,j)$ to $(i+2,j+2)$, ``down" from $(i,j)$ to $(i+2,j-2)$, or ``across" from $(i,j)$ to $(i+2,j)$. While the choice of starting point, ending point and step size is not standard (cf.~\cite[Exercise 6.38(d)]{EC2}), they suit our purposes better.

For example, when $n=5$, there are nine such paths, as seen in Figure \ref{fig:Motz}. 

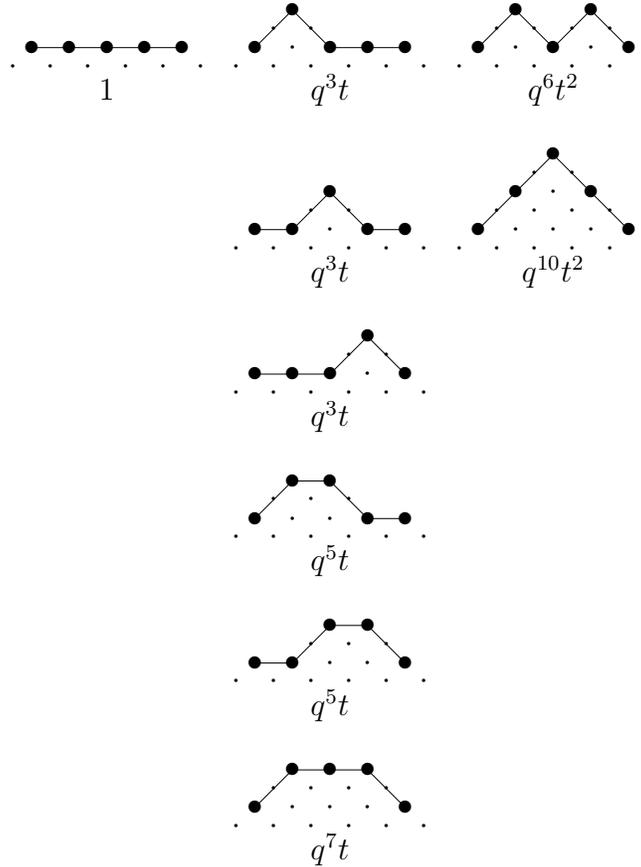
\begin{figure}[h]
\[
\begin{array}{ c c c }
\begin{xy}0;<.25cm,0cm>:
(0,0); (8,0) **@{-}, (0,0)*{\bullet}, (2,0)*{\bullet},  (4,0)*{\bullet}, (6,0)*{\bullet},  (8,0)*{\bullet}, (-1,-1)*{\cdot}, (1,-1)*{\cdot}, (3,-1)*{\cdot}, (5,-1)*{\cdot}, (7,-1)*{\cdot}, (9,-1)*{\cdot}
\end{xy} 
&
\begin{xy}0;<.25cm,0cm>:
(0,0); (2,2) **@{-}, (2,2); (4,0) **@{-}, (4,0); (8,0) **@{-}, (0,0)*{\bullet},  (2,0)*{\cdot},  (4,0)*{\bullet}, (6,0)*{\bullet}, (8,0)*{\bullet}, (2,2)*{\bullet}, (-1,-1)*{\cdot}, (1,-1)*{\cdot}, (3,-1)*{\cdot}, (5,-1)*{\cdot}, (7,-1)*{\cdot}, (9,-1)*{\cdot}, (1,1)*{\cdot}, (3,1)*{\cdot}
\end{xy}
&
\begin{xy}0;<.25cm,0cm>:
(0,0); (2,2) **@{-}, (2,2); (4,0) **@{-}, (4,0); (6,2) **@{-}, (6,2); (8,0) **@{-}, (0,0)*{\bullet}, (2,0)*{\cdot}, (4,0)*{\bullet}, (6,0)*{\cdot}, (8,0)*{\bullet}, (6,2)*{\bullet}, (2,2)*{\bullet}, (-1,-1)*{\cdot}, (1,-1)*{\cdot}, (3,-1)*{\cdot}, (5,-1)*{\cdot}, (7,-1)*{\cdot}, (9,-1)*{\cdot}, (3,1)*{\cdot}, (5,1)*{\cdot}, (1,1)*{\cdot}, (7,1)*{\cdot}
\end{xy}
\\ 1 & q^3t & q^6t^2 \\ \\
&
\begin{xy}0;<.25cm,0cm>:
(0,0); (2,0) **@{-}, (2,0); (4,2) **@{-}, (4,2); (6,0) **@{-}, (6,0); (8,0) **@{-}, (0,0)*{\bullet},(2,0)*{\bullet}, (4,0)*{\cdot}, (6,0)*{\bullet}, (8,0)*{\bullet}, (4,2)*{\bullet}, (-1,-1)*{\cdot}, (1,-1)*{\cdot}, (3,-1)*{\cdot}, (5,-1)*{\cdot}, (7,-1)*{\cdot}, (9,-1)*{\cdot}, (3,1)*{\cdot}, (5,1)*{\cdot}
\end{xy}
&
\begin{xy}0;<.25cm,0cm>:
(0,0); (4,4) **@{-}, (4,4); (8,0) **@{-}, (0,0)*{\bullet},(2,0)*{\cdot}, (4,0)*{\cdot}, (6,0)*{\cdot}, (8,0)*{\bullet}, (6,2)*{\bullet}, (4,2)*{\cdot}, (2,2)*{\bullet}, (4,4)*{\bullet}, (-1,-1)*{\cdot}, (1,-1)*{\cdot}, (3,-1)*{\cdot}, (5,-1)*{\cdot}, (7,-1)*{\cdot}, (9,-1)*{\cdot}, (3,1)*{\cdot}, (5,1)*{\cdot}, (3,3)*{\cdot}, (5,3)*{\cdot}, (7,1)*{\cdot}, (1,1)*{\cdot}
\end{xy}
\\ & q^3t & q^{10}t^2 \\ \\
&
\begin{xy}0;<.25cm,0cm>:
(0,0); (4,0) **@{-}, (4,0); (6,2) **@{-}, (6,2); (8,0) **@{-}, (0,0)*{\bullet},(2,0)*{\bullet}, (4,0)*{\bullet}, (6,0)*{\cdot}, (8,0)*{\bullet}, (6,2)*{\bullet}, (-1,-1)*{\cdot}, (1,-1)*{\cdot}, (3,-1)*{\cdot}, (5,-1)*{\cdot}, (7,-1)*{\cdot}, (9,-1)*{\cdot}, (5,1)*{\cdot}, (7,1)*{\cdot}
\end{xy}
\\ & q^3t & \\ \\
& \begin{xy}0;<.25cm,0cm>:
(0,0); (2,2) **@{-}, (2,2); (4,2) **@{-}, (4,2); (6,0) **@{-}, (6,0); (8,0) **@{-}, (0,0)*{\bullet},(2,0)*{\cdot}, (4,0)*{\cdot}, (6,0)*{\bullet}, (8,0)*{\bullet}, (2,2)*{\bullet}, (4,2)*{\bullet}, (-1,-1)*{\cdot}, (1,-1)*{\cdot}, (3,-1)*{\cdot}, (5,-1)*{\cdot}, (7,-1)*{\cdot}, (9,-1)*{\cdot}, (1,1)*{\cdot}, (3,1)*{\cdot}, (5,1)*{\cdot}
\end{xy} 
\\ & q^5t \\ \\
&
\begin{xy}0;<.25cm,0cm>:
(0,0); (2,0) **@{-}, (2,0); (4,2) **@{-}, (4,2); (6,2) **@{-}, (6,2); (8,0) **@{-}, (0,0)*{\bullet},(2,0)*{\bullet}, (4,0)*{\cdot}, (6,0)*{\cdot}, (8,0)*{\bullet}, (4,2)*{\bullet}, (6,2)*{\bullet}, (-1,-1)*{\cdot}, (1,-1)*{\cdot}, (3,-1)*{\cdot}, (5,-1)*{\cdot}, (7,-1)*{\cdot}, (9,-1)*{\cdot}, (3,1)*{\cdot}, (5,1)*{\cdot}, (7,1)*{\cdot}
\end{xy}
\\ & q^5t\\ \\
&
\begin{xy}0;<.25cm,0cm>:
(0,0); (2,2) **@{-}, (2,2); (6,2) **@{-}, (6,2); (8,0) **@{-}, (0,0)*{\bullet},(2,0)*{\cdot}, (4,0)*{\cdot}, (6,0)*{\cdot}, (8,0)*{\bullet}, (6,2)*{\bullet}, (4,2)*{\bullet}, (2,2)*{\bullet}, (-1,-1)*{\cdot}, (1,-1)*{\cdot}, (3,-1)*{\cdot}, (5,-1)*{\cdot}, (7,-1)*{\cdot}, (9,-1)*{\cdot}, (1,1)*{\cdot}, (3,1)*{\cdot}, (5,1)*{\cdot}, (7,1)*{\cdot}
\end{xy}
\\
& q^7t
 \end{array}
\]
\caption{Motzkin paths of order 5.}\label{fig:Motz}
\end{figure}

We will define an injection $\rho: \M(n) \to \Dy(n)$. For a Motzkin path $m\in \M(n)$, let $\rho(m)=p \in \Dy(n)$ denote the Dyck path obtained by adding an up step on the left, adding a down step on the right, and replacing each horizontal step with a down-up step. Moreover, up steps in $m$ become two up steps in $p$ and down steps in $m$ become two down steps in $p$:
\[ \begin{array}{ c c c } m & \mapsto & \rho(m) \\ \hline  \\
 \begin{xy}0;<.5cm,0cm>:
(0,0); (1,0) **@{-}, (0,0)*{\bullet}, (1,0)*{\bullet}, (.5,-.5)*{\cdot}\end{xy} & \mapsto & \begin{xy}0;<.5cm,0cm>:
(.5,-.5); (0,0) **@{-}, (1,0) **@{-}, (0,0)*{\bullet}, (1,0)*{\bullet}, (.5,-.5)*{\bullet}
\end{xy} \\ \\
\begin{xy}0;<.5cm,0cm>:
(0,0); (1,1) **@{-}, (0,0)*{\bullet}, (1,1)*{\bullet}, (.5,.5)*{\cdot}
\end{xy} & \mapsto & \begin{xy}0;<.5cm,0cm>:
(0,0); (1,1) **@{-}, (0,0)*{\bullet}, (1,1)*{\bullet}, (.5,.5)*{\bullet}
\end{xy} \\ \\
\begin{xy}0;<.5cm,0cm>:
(0,0); (1,-1) **@{-}, (0,0)*{\bullet}, (1,-1)*{\bullet}, (.5,-.5)*{\cdot}
\end{xy} & \mapsto & \begin{xy}0;<.5cm,0cm>:
(0,0); (1,-1) **@{-}, (0,0)*{\bullet}, (1,-1)*{\bullet}, (.5,-.5)*{\bullet}
\end{xy}
 \end{array}\]
For example, below is a Motzkin path $m$ and its corresponding Dyck path $p$:
\[
m = \begin{xy}0;<.25cm,0cm>:
(0,0);  (2,0) **@{-}, (2,0); (4,2) **@{-}, (4,2); (8,2) **@{-}, (8,2); (10,0) **@{-}, (0,0)*{\bullet},(2,0)*{\bullet}, (4,0)*{\cdot}, (6,0)*{\cdot}, (10,0)*{\bullet}, (4,2)*{\bullet}, (6,2)*{\bullet}, (8,2)*{\bullet}, (-1,-1)*{\cdot}, (1,-1)*{\cdot}, (3,-1)*{\cdot}, (5,-1)*{\cdot}, (7,-1)*{\cdot}, (9,-1)*{\cdot}, (3,1)*{\cdot}, (5,1)*{\cdot}, (7,1)*{\cdot}, (9,1)*{\cdot}, (8,0)*{\cdot}, (11,-1)*{\cdot}
\end{xy} \mapsto \begin{xy}0;<.25cm,0cm>:
(0,0); (-1,-1) **@{-},(1,-1) **@{-}, (1,-1); (4,2) **@{-}, (5,1); (4,2) **@{-}, (6,2) **@{-}, (7,1); (6,2) **@{-}, (8,2) **@{-}, (8,2); (11,-1) **@{-}, (0,0)*{\bullet},(2,0)*{\bullet}, (4,0)*{\cdot}, (6,0)*{\cdot}, (10,0)*{\bullet}, (4,2)*{\bullet}, (6,2)*{\bullet}, (8,2)*{\bullet}, (-1,-1)*{\bullet}, (1,-1)*{\bullet}, (3,-1)*{\cdot}, (5,-1)*{\cdot}, (7,-1)*{\cdot}, (9,-1)*{\cdot}, (3,1)*{\bullet}, (5,1)*{\bullet}, (7,1)*{\bullet}, (9,1)*{\bullet}, (8,0)*{\cdot}, (11,-1)*{\bullet}
\end{xy} = \rho(m).
\]

We define the area and rank of a Motzkin path $m$ to be the area and rank of $\rho(m)$: \[ \area(m) = \area(\rho(m)), \qquad \rk(m) = \rk(\rho(m)).\] Put simply, $\area(m)$ is the number of diamonds that will fit below the path just as with Dyck paths, and $\rk(m)$ is the number of ``up" steps in $m$. Thus in the example above we have $\area(m) = 7$, $\rk(m) = 1$. Denote the generating function for area and rank of Motzkin paths as: \[ \M(n;q,t) = \sum_{m \in \M(n)} q^{\area(m)} t^{\rk(m)}.\]
For example, when $n =4$ and $n=5$ we have:
\begin{align*}
\M(4;q,t) &= 1 + (2q^3 + q^5)t,\\
\M(5;q,t) &= 1 + (3q^3 + 2q^5 + q^7)t + (q^6 + q^{10})t^2.
\end{align*}

Comparing with the generating functions for Dyck paths in \eqref{eq:D4} and \eqref{eq:D5}, we see evidence for Theorem \ref{thm:main}. 

We will now consider a surjection $\theta: \Dy(n) \to \M(n)$. Given a Dyck path $p$, we form $\theta(p) = m \in \M(n)$ by connecting consecutive odd nodes of $p$ (i.e., those nodes of the form $(2i-1,2j-1)$), with straight lines. For example, below is a Dyck path and its corresponding Motzkin path:
\[
p=\begin{xy}0;<.5cm,0cm>:
(1,1); (-1,-1) **@{-},(2,0) **@{-}, (2,0); (4,2) **@{-}, (5,1); (4,2) **@{-}, (6,2) **@{-}, (7,3); (6,2) **@{-}, (8,2) **@{-}, (8,2); (11,-1) **@{-}, (0,0)*{\circ}, (2,0)*{\circ}, (4,0)*{\cdot}, (6,0)*{\cdot}, (10,0)*{\circ}, (4,2)*{\circ}, (6,2)*{\circ}, (8,2)*{\circ}, (-1,-1)*{\bullet}, (1,1)*{\bullet}, (1,-1)*{\cdot}, (3,-1)*{\cdot}, (5,-1)*{\cdot}, (7,-1)*{\cdot}, (9,-1)*{\cdot}, (3,1)*{\bullet}, (5,1)*{\bullet}, (7,1)*{\cdot}, (7,3)*{\bullet}, (9,1)*{\bullet}, (8,0)*{\cdot}, (11,-1)*{\bullet}
\end{xy} \mapsto
\begin{xy}0;<.5cm,0cm>:
(0,0);  (2,0) **@{-}, (2,0); (4,2) **@{-}, (4,2); (8,2) **@{-}, (8,2); (10,0) **@{-}, (0,0)*{\bullet},(2,0)*{\bullet}, (4,0)*{\cdot}, (6,0)*{\cdot}, (10,0)*{\bullet}, (4,2)*{\bullet}, (6,2)*{\bullet}, (8,2)*{\bullet}, (-1,-1)*{\cdot}, (1,-1)*{\cdot}, (3,-1)*{\cdot}, (5,-1)*{\cdot}, (7,-1)*{\cdot}, (9,-1)*{\cdot}, (3,1)*{\cdot}, (5,1)*{\cdot}, (7,1)*{\cdot}, (9,1)*{\cdot}, (8,0)*{\cdot}, (11,-1)*{\cdot}
\end{xy} = \theta(p)
\]

Notice that the fibers of a given Motzkin path $m$ all have their odd (white) vertices in the same locations. Thus, they can only differ between  these nodes. In particular, suppose $p\neq p' \in \Dy(n)$ are such that $\theta(p) = \theta(p')$. Then there are two consecutive white vertices such that, say, $p$ goes up-down between them: \begin{xy}0;<.25cm,0cm>:
(1,1)*{\bullet}; (0,0)*{\circ} **@{-}, (2,0)*{\circ} **@{-}
\end{xy},
while $p'$ goes down-up: \begin{xy}0;<.25cm,0cm>:
(1,0)*{\bullet}; (0,1)*{\circ} **@{-}, (2,1)*{\circ} **@{-}
\end{xy}.

For a given Motzkin path $m \in \M(n)$, let \[\B(m) = \{ p \in \Dy(n) : \theta(p) = m\}.\] For example in Figure \ref{fig:thetam} we see the set $\B(m)$ for a particular $m$. Notice that the partial order on $\B(m)$ induced by $(\Dy(n),\leq)$ is the boolean algebra given by flipping down-up steps between odd vertices for up-down steps. The minimal element of $\B(m)$ is thus $\rho(m)$.

We will make this idea precise by relating our constructions to work of Simion and Ullman.

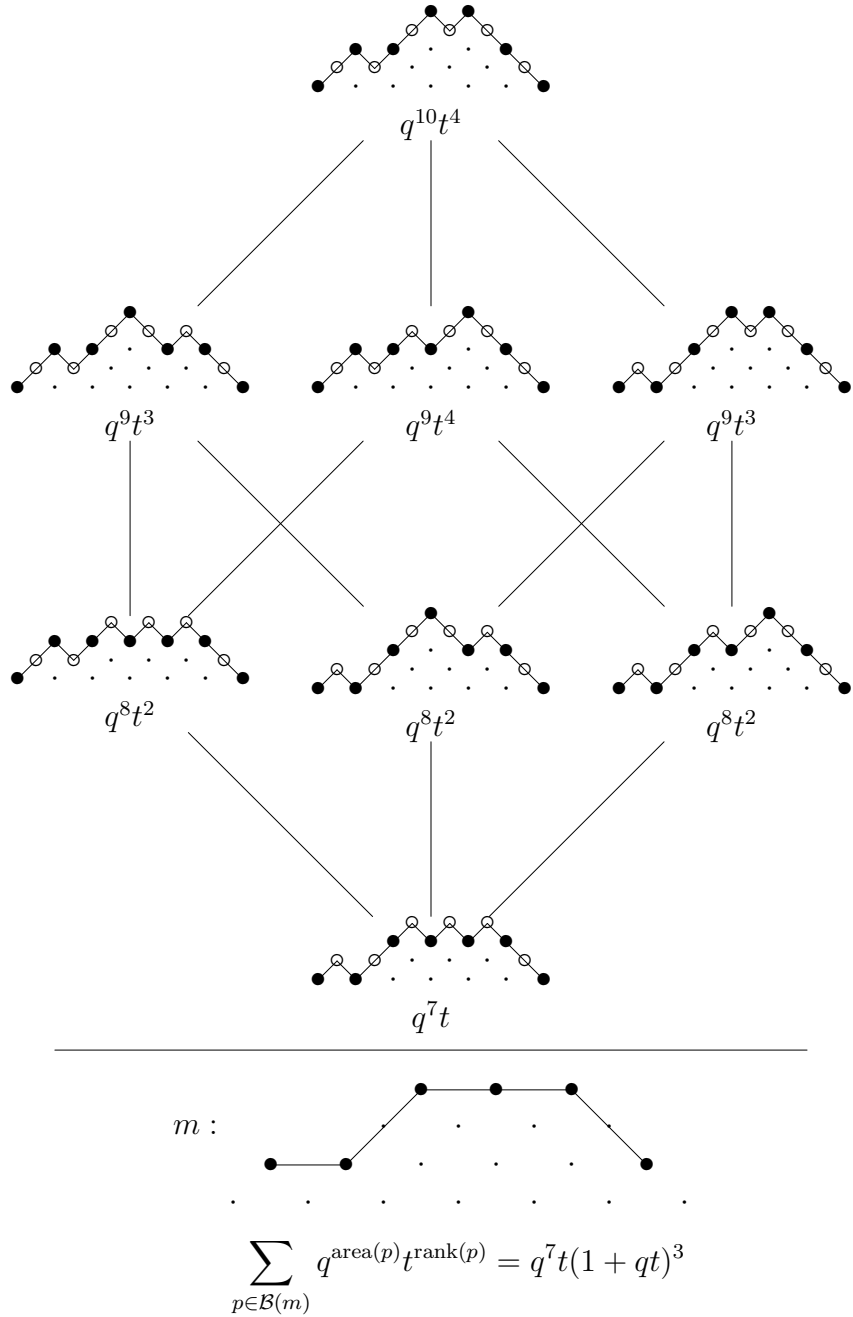
\begin{figure}
\[
\begin{xy}0;<1cm,0cm>:
(-5,2); (5,2) **@{-},
(0,0)*{\begin{xy}0;<.5cm,0cm>:
(0,0);  (2,0) **@{-}, (2,0); (4,2) **@{-}, (4,2); (8,2) **@{-}, (8,2); (10,0) **@{-}, (0,0)*{\bullet},(2,0)*{\bullet}, (4,0)*{\cdot}, (6,0)*{\cdot}, (10,0)*{\bullet}, (4,2)*{\bullet}, (6,2)*{\bullet}, (8,2)*{\bullet}, (-1,-1)*{\cdot}, (1,-1)*{\cdot}, (3,-1)*{\cdot}, (5,-1)*{\cdot}, (7,-1)*{\cdot}, (9,-1)*{\cdot}, (3,1)*{\cdot}, (5,1)*{\cdot}, (7,1)*{\cdot}, (9,1)*{\cdot}, (8,0)*{\cdot}, (11,-1)*{\cdot}, (-2,1)*{m:},(5,-3)*{\displaystyle\sum_{p \in \B(m)} q^{\area(p)}t^{\rk(p)} = q^7t(1+qt)^3}
\end{xy}}="m",
(0,3)*{\begin{xy}0;<.25cm,0cm>:
(0,0); (-1,-1) **@{-},(1,-1) **@{-}, (4,2); (1,-1) **@{-}, (5,1) **@{-}, (6,2); (5,1) **@{-},  (7,1) **@{-}, (8,2); (7,1) **@{-}, (11,-1) **@{-}, (0,0)*{\circ}, (2,0)*{\circ}, (4,0)*{\cdot}, (6,0)*{\cdot}, (10,0)*{\circ}, (4,2)*{\circ}, (6,2)*{\circ}, (8,2)*{\circ}, (-1,-1)*{\bullet},  (1,-1)*{\bullet}, (3,-1)*{\cdot}, (5,-1)*{\cdot}, (7,-1)*{\cdot}, (9,-1)*{\cdot}, (3,1)*{\bullet}, (5,1)*{\bullet}, (7,1)*{\bullet}, (9,1)*{\bullet}, (8,0)*{\cdot}, (11,-1)*{\bullet}, (5,-3)*{q^{7}t}
\end{xy}}="p0",
(-4,7)*{\begin{xy}0;<.25cm,0cm>:
(1,1); (-1,-1) **@{-},(2,0) **@{-}, (4,2); (2,0) **@{-}, (5,1) **@{-}, (6,2); (5,1) **@{-},  (7,1) **@{-}, (8,2); (7,1) **@{-}, (11,-1) **@{-}, (0,0)*{\circ}, (2,0)*{\circ}, (4,0)*{\cdot}, (6,0)*{\cdot}, (10,0)*{\circ}, (4,2)*{\circ}, (6,2)*{\circ}, (8,2)*{\circ}, (-1,-1)*{\bullet},  (1,-1)*{\cdot}, (1,1)*{\bullet}, (3,-1)*{\cdot}, (5,-1)*{\cdot}, (7,-1)*{\cdot}, (9,-1)*{\cdot}, (3,1)*{\bullet}, (5,1)*{\bullet}, (7,1)*{\bullet}, (9,1)*{\bullet}, (8,0)*{\cdot}, (11,-1)*{\bullet}, (5,-3)*{q^{8}t^2}
\end{xy}}="p1",
(0,7)*{\begin{xy}0;<.25cm,0cm>:
(0,0); (-1,-1) **@{-},(1,-1) **@{-}, (5,3); (1,-1) **@{-}, (7,1) **@{-}, (8,2); (7,1) **@{-}, (11,-1) **@{-}, (0,0)*{\circ}, (2,0)*{\circ}, (4,0)*{\cdot}, (6,0)*{\cdot}, (10,0)*{\circ}, (4,2)*{\circ}, (6,2)*{\circ}, (8,2)*{\circ}, (-1,-1)*{\bullet},  (1,-1)*{\bullet}, (3,-1)*{\cdot}, (5,-1)*{\cdot}, (7,-1)*{\cdot}, (9,-1)*{\cdot}, (3,1)*{\bullet}, (5,1)*{\cdot}, (5,3)*{\bullet}, (7,1)*{\bullet}, (9,1)*{\bullet}, (8,0)*{\cdot}, (11,-1)*{\bullet}, (5,-3)*{q^{8}t^2}
\end{xy}}="p2",
(4,7)*{\begin{xy}0;<.25cm,0cm>:
(0,0); (-1,-1) **@{-},(1,-1) **@{-}, (4,2); (1,-1) **@{-}, (5,1) **@{-}, (7,3); (5,1) **@{-},   (11,-1) **@{-}, (0,0)*{\circ}, (2,0)*{\circ}, (4,0)*{\cdot}, (6,0)*{\cdot}, (10,0)*{\circ}, (4,2)*{\circ}, (6,2)*{\circ}, (8,2)*{\circ}, (-1,-1)*{\bullet},  (1,-1)*{\bullet}, (3,-1)*{\cdot}, (5,-1)*{\cdot}, (7,-1)*{\cdot}, (9,-1)*{\cdot}, (3,1)*{\bullet}, (5,1)*{\bullet}, (7,1)*{\cdot}, (7,3)*{\bullet}, (9,1)*{\bullet}, (8,0)*{\cdot}, (11,-1)*{\bullet}, (5,-3)*{q^{8}t^2}
\end{xy}}="p3",
(-4,11)*{\begin{xy}0;<.25cm,0cm>:
(1,1); (-1,-1) **@{-},(2,0) **@{-}, (5,3); (2,0) **@{-}, (7,1) **@{-}, (8,2); (7,1) **@{-}, (11,-1) **@{-}, (0,0)*{\circ}, (2,0)*{\circ}, (4,0)*{\cdot}, (6,0)*{\cdot}, (10,0)*{\circ}, (4,2)*{\circ}, (6,2)*{\circ}, (8,2)*{\circ}, (-1,-1)*{\bullet},  (1,-1)*{\cdot}, (1,1)*{\bullet}, (3,-1)*{\cdot}, (5,-1)*{\cdot}, (7,-1)*{\cdot}, (9,-1)*{\cdot}, (3,1)*{\bullet}, (5,1)*{\cdot}, (5,3)*{\bullet}, (7,1)*{\bullet}, (9,1)*{\bullet}, (8,0)*{\cdot}, (11,-1)*{\bullet}, (5,-3)*{q^{9}t^3}
\end{xy}}="p4",
(0,11)*{\begin{xy}0;<.25cm,0cm>:
(1,1); (-1,-1) **@{-},(2,0) **@{-}, (2,0); (4,2) **@{-}, (5,1); (4,2) **@{-}, (6,2) **@{-}, (7,3); (6,2) **@{-}, (8,2) **@{-}, (8,2); (11,-1) **@{-}, (0,0)*{\circ}, (2,0)*{\circ}, (4,0)*{\cdot}, (6,0)*{\cdot}, (10,0)*{\circ}, (4,2)*{\circ}, (6,2)*{\circ}, (8,2)*{\circ}, (-1,-1)*{\bullet}, (1,1)*{\bullet}, (1,-1)*{\cdot}, (3,-1)*{\cdot}, (5,-1)*{\cdot}, (7,-1)*{\cdot}, (9,-1)*{\cdot}, (3,1)*{\bullet}, (5,1)*{\bullet}, (7,1)*{\cdot}, (7,3)*{\bullet}, (9,1)*{\bullet}, (8,0)*{\cdot}, (11,-1)*{\bullet}, (5,-3)*{q^{9}t^4}
\end{xy}}="p5",
(4,11)*{\begin{xy}0;<.25cm,0cm>:
(0,0); (-1,-1) **@{-}, (1,-1) **@{-}, (5,3); (1,-1) **@{-}, (6,2) **@{-}, (7,3); (6,2) **@{-}, (11,-1) **@{-}, (0,0)*{\circ}, (2,0)*{\circ}, (4,0)*{\cdot}, (6,0)*{\cdot}, (10,0)*{\circ}, (4,2)*{\circ}, (6,2)*{\circ}, (8,2)*{\circ}, (-1,-1)*{\bullet}, (1,-1)*{\bullet}, (3,-1)*{\cdot}, (5,-1)*{\cdot}, (7,-1)*{\cdot}, (9,-1)*{\cdot}, (3,1)*{\bullet}, (5,3)*{\bullet}, (5,1)*{\cdot}, (7,1)*{\cdot}, (7,3)*{\bullet}, (9,1)*{\bullet}, (8,0)*{\cdot}, (11,-1)*{\bullet}, (5,-3)*{q^9t^3}
\end{xy}}="p6",
(0,15)*{\begin{xy}0;<.25cm,0cm>:
(1,1); (-1,-1) **@{-}, (2,0) **@{-}, (5,3); (2,0) **@{-}, (6,2) **@{-}, (7,3); (6,2) **@{-}, (11,-1) **@{-}, (0,0)*{\circ}, (2,0)*{\circ}, (4,0)*{\cdot}, (6,0)*{\cdot}, (10,0)*{\circ}, (4,2)*{\circ}, (6,2)*{\circ}, (8,2)*{\circ}, (-1,-1)*{\bullet}, (1,-1)*{\cdot}, (1,1)*{\bullet}, (3,-1)*{\cdot}, (5,-1)*{\cdot}, (7,-1)*{\cdot}, (9,-1)*{\cdot}, (3,1)*{\bullet}, (5,3)*{\bullet}, (5,1)*{\cdot}, (7,1)*{\cdot}, (7,3)*{\bullet}, (9,1)*{\bullet}, (8,0)*{\cdot}, (11,-1)*{\bullet}, (5,-3)*{q^{10}t^4}
\end{xy}}="p7", "p0"; "p1" **@{-}, "p2" **@{-}, "p3" **@{-}, "p1"; "p4" **@{-}, "p5" **@{-}, "p2"; "p4" **@{-}, "p6" **@{-}, "p3"; "p5" **@{-}, "p6" **@{-}, "p7"; "p4" **@{-}, "p5" **@{-}, "p6" **@{-}
\end{xy}
\]
\caption{The induced partial order on $\B(m)$ for a Motzkin path $m \in \M(6)$.}\label{fig:thetam}
\end{figure}

\subsection{Simion and Ullman's encoding}\label{sec:SU}

In \cite{SU91}, Simion and Ullman provide a certain encoding of noncrossing partitions as words on the alphabet $\{b, e, l, r\}$. Given a noncrossing partition $\pi \in NC(n)$, the encoding assigns a word $w(\pi) = w = w_1 w_2 \cdots w_{n-1}$ of length $n-1$ as follows (see \cite[p. 199]{SU91}):

\[ w_i = \begin{cases} b & \mbox{if $i$ and $i+1$ are in different blocks }\\ & \mbox{ and $i$ is \emph{not} the largest element in its block,} \\
e & \mbox{if $i$ and $1$ are in different blocks }\\
& \mbox{ and $i+1$ is \emph{not} the smallest element in its block,}\\
l & \mbox{if $i$ and $i+1$ are in different blocks, }\\
 & \mbox{ $i$ is the largest element in its block, }\\
 & \mbox{ and $i+1$ is the smallest element in its block,}\\
r & \mbox{if $i$ and $i+1$ are in the same block.}
\end{cases}\]

We call such words \emph{SU-words}, and let \[SU(n):= \{ w(\pi) : \pi \in NC(n)\}.\] For example, if $\pi = \{ \{1,2,6\}, \{3\}, \{4,5\}\}$, we have its SU-word is $w(\pi) = rblre$. Let $B(w), E(w), L(w), R(w)$ denote the sets of positions in $w$ containing the letters $b$, $e$, $l$, and $r$, respectively. For example $w=rblre$ has $B(w) = \{2\}$, $E(w)=\{5\}$, $L(w) = \{3\}$, and $R(w) = \{1,4\}$.

By comparing the rules above with Stump's bijection $\phi$ between Dyck paths and noncrossing partitions, we get an obvious correspondence between SU-words and Dyck paths as follows. Given an SU-word $w$, we form a Dyck path by beginning with an up step, ending with a down step, and drawing the following segments for each letter of $w$:
\[ 
\begin{array}{ c c c}
b & \leftrightarrow & \begin{xy}0;<.25cm,0cm>:
(0,-1)*{\circ}; (2,1)*{\circ} **@{-}, (1,0)*{\bullet}
\end{xy} \\ \\
e & \leftrightarrow & \begin{xy}0;<.25cm,0cm>:
(0,1)*{\circ}; (2,-1)*{\circ} **@{-}, (1,0)*{\bullet}
\end{xy} \\ \\
l & \leftrightarrow &
\begin{xy}0;<.25cm,0cm>:
(1,0)*{\bullet}; (0,1)*{\circ} **@{-}, (2,1)*{\circ} **@{-}
\end{xy} \\ \\
r & \leftrightarrow &
\begin{xy}0;<.25cm,0cm>:
(1,1)*{\bullet}; (0,0)*{\circ} **@{-}, (2,0)*{\circ} **@{-}
\end{xy}
\end{array}
\]

For example, with $w = rblre$, we have: \[ \begin{xy}0;<.5cm,0cm>:
(1,1); (-1,-1) **@{-},(2,0) **@{-}, (2,0); (4,2) **@{-}, (5,1); (4,2) **@{-}, (6,2) **@{-}, (7,3); (6,2) **@{-}, (8,2) **@{-}, (8,2); (11,-1) **@{-}, (0,0)*{\circ}, (2,0)*{\circ}, (4,0)*{\cdot}, (6,0)*{\cdot}, (10,0)*{\circ}, (4,2)*{\circ}, (6,2)*{\circ}, (8,2)*{\circ}, (-1,-1)*{\bullet}, (1,1)*{\bullet}, (1,-1)*{\cdot}, (3,-1)*{\cdot}, (5,-1)*{\cdot}, (7,-1)*{\cdot}, (9,-1)*{\cdot}, (3,1)*{\bullet}, (5,1)*{\bullet}, (7,1)*{\cdot}, (7,3)*{\bullet}, (9,1)*{\bullet}, (8,0)*{\cdot}, (11,-1)*{\bullet}, (1,0)*{r}, (3,0)*{b}, (5,0)*{l}, (7,0)*{r}, (9,0)*{e}
\end{xy}\]
By abuse of notation, let $w(p) \in SU(n)$ denote the SU-word associated to the Dyck path $p$, i.e., $w(p) = w(\phi(p))$. 

We have the following.

\begin{obs}[\cite{SU91}, Observation 2.3]\label{obs:rk}
Let $p \in \Dy(n)$, and let $w = w(p)$. Then, \begin{align*}
 n&=|B(w)|+|E(w)|+|L(w)|+|R(w)|+1,\\
  &=2|B(w)| + |L(w)|+|R(w)|+1,
 \end{align*} and \[ \rk(p) = |B(w)|+|R(w)|.\]
\end{obs}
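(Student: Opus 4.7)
The first equality is immediate: the word $w(\pi)$ is, by construction, indexed by $i \in \{1,2,\ldots,n-1\}$, and the four cases defining $w_i$ are mutually exclusive and exhaustive, so $|B(w)|+|E(w)|+|L(w)|+|R(w)| = n-1$.

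For the second equality, I would exploit the letter-to-Dyck-path dictionary displayed just above the observation. Each letter contributes two consecutive steps: $b$ contributes two up-steps, $e$ contributes two down-steps, $l$ contributes a down-up pair, and $r$ contributes an up-down pair. Together with the single initial up-step and single terminal down-step, the total number of up-steps in $p$ is $1 + 2|B(w)| + |L(w)| + |R(w)|$, which must equal $n$. Symmetrically, counting down-steps gives $1 + 2|E(w)| + |L(w)| + |R(w)| = n$, so $|B(w)| = |E(w)|$, and combining this with the first identity yields the second.

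For the rank formula, the plan is to count the blocks of $\pi = \phi(p)$ by tracking which $i \in \{1,\ldots,n\}$ are smallest in their block. For $i=1$ this is automatic, so the number of blocks equals $1 + \#\{i : 1 \le i \le n-1,\ i{+}1 \text{ is smallest in its block}\}$. I would then unpack the four cases: if $w_i = l$, then $i+1$ is smallest in its block by definition; if $w_i = r$, then $i$ and $i+1$ lie in the same block, so $i+1$ is not smallest; and if $w_i = e$, then $i+1$ is explicitly declared not smallest. The one non-definitional case is $w_i = b$: I would argue by the noncrossing property that $i+1$ must be smallest in its block. Indeed, $w_i = b$ means $i$ has a larger element $k > i+1$ in its block, and if $i+1$ had a smaller element $j < i$ in its block, then $j < i < i+1 < k$ with $\{j,i+1\}$ in one block and $\{i,k\}$ in another would be a crossing pair. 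Hence new blocks start at $i+1$ precisely when $w_i \in \{b,l\}$, giving $|\text{blocks}(\pi)| = 1 + |B(w)| + |L(w)|$.

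Finally, since rank in $NC(n)$ equals $n$ minus the number of blocks, I would conclude
\[
\rk(p) = n - 1 - |B(w)| - |L(w)| = \bigl(|B(w)|+|E(w)|+|L(w)|+|R(w)|\bigr) - |B(w)| - |L(w)| = |E(w)| + |R(w)|,
\]
which equals $|B(w)| + |R(w)|$ by the identity $|B(w)| = |E(w)|$ established above. The main obstacle is the noncrossing verification in the $w_i = b$ case (and the analogous argument for $e$ if one wishes to prove $|B|=|E|$ directly from the partition rather than via the Dyck path); everything else is a bookkeeping exercise once the letter-to-step dictionary is in hand.
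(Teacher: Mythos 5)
Your proof is correct. Note first that the paper itself offers no proof of this statement: it is quoted verbatim from Simion and Ullman (Observation 2.3 of \cite{SU91}), so there is no internal argument to compare yours against. Your reconstruction is sound at every step. The count $|B|+|E|+|L|+|R|=n-1$ is immediate from the length of the word; the identity $|B|=|E|$ follows exactly as you say from the letter-to-step dictionary ($b$ contributes two up steps, $e$ two down steps, $l$ and $r$ one of each, and the bordering up/down steps contribute one each), since a Dyck path of order $n$ has $n$ up steps and $n$ down steps. The one genuinely non-definitional point is the $w_i=b$ case of the block count, and your noncrossing argument handles it correctly: if $i$ had a larger blockmate $k>i+1$ and $i+1$ had a smaller blockmate $j<i$, then $j<i<i+1<k$ would exhibit a crossing between the two blocks. (As a bonus, the same argument shows the four cases in the definition of $w_i$ are mutually exclusive, i.e.\ that the encoding is well defined --- something the paper also takes on faith.) Combining $\#\mathrm{blocks}=1+|B|+|L|$ with $\rk(p)=n-\#\mathrm{blocks}$ and $|B|=|E|$ gives the rank formula. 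This is a complete, self-contained verification of a fact the paper delegates to the literature.
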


We can also map an SU-word to a Motzkin path via the correspondence with Dyck paths, i.e., if $w = w(p)$, we declare $\theta(w) = \theta(p)$. Explicitly, this becomes the path constructed from the word by:
\[
\begin{array}{ c c c}
b & \mapsto & \begin{xy}0;<.25cm,0cm>:
(0,-1)*{\bullet}; (2,1)*{\bullet} **@{-}, (1,0)*{\cdot}
\end{xy} \\ \\
e & \mapsto & \begin{xy}0;<.25cm,0cm>:
(0,1)*{\bullet}; (2,-1)*{\bullet} **@{-}, (1,0)*{\cdot}
\end{xy} \\ \\
l & \mapsto &
\begin{xy}0;<.25cm,0cm>:
(0,0)*{\bullet}; (2,0)*{\bullet} **@{-}
\end{xy} \\ \\
r & \mapsto &
\begin{xy}0;<.25cm,0cm>:
(0,0)*{\bullet}; (2,0)*{\bullet} **@{-}
\end{xy}
\end{array}
\]

Under this correspondence, we see the following.

\begin{obs}\label{obs:classes}
Let $p, p \in \Dy(n)$. Then $\theta(p) = \theta(p')$ if and only if  $B(w(p)) = B(w(p'))$ and $E(w(p)) = E(w(p'))$. 
\end{obs}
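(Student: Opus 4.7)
The plan is to read off the observation directly from the explicit description given just above it of how the map $\theta$ acts on SU-words, together with the letter-by-letter correspondence between SU-words and Dyck paths. The essential content is that the induced map $w \mapsto \theta(w)$ on SU-words sends each of the four letters $b,e,l,r$ to a single Motzkin step-type, and that while $b$ and $e$ are sent to distinct step types (an ``up'' step and a ``down'' step, respectively), the two letters $l$ and $r$ are sent to the \emph{same} step type (an ``across'' step). So the Motzkin path $\theta(p)$ only remembers which positions of $w(p)$ contain $b$ and which contain $e$; the positions of $l$ versus $r$ are invisible to $\theta$.

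With this in mind, both directions of the ``if and only if'' are essentially by inspection. For the easy direction, suppose $B(w(p)) = B(w(p'))$ and $E(w(p)) = E(w(p'))$. Then the remaining positions (those in $L(w(p)) \cup R(w(p))$ and in $L(w(p')) \cup R(w(p'))$) also coincide as sets, since all three other sets partition $\{1,\ldots,n-1\}$. At each position $i$, the letters of $w(p)$ and $w(p')$ fall into the same one of the three classes $\{b\}$, $\{e\}$, $\{l,r\}$, and hence produce the same Motzkin step. Concatenating these with the fixed initial and final configurations (the starting and ending points of a Motzkin path of order $n$), we conclude $\theta(p) = \theta(p')$.

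For the converse, suppose $\theta(p) = \theta(p')$. Reading each path step-by-step and comparing with the table that defines $\theta$ on SU-words, at each position $i$ we can recover whether the $i$th letter is $b$ (an up step), $e$ (a down step), or one of $l$ or $r$ (an across step). Therefore $B(w(p)) = B(w(p'))$ and $E(w(p)) = E(w(p'))$, as required.

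There isn't really a hard step here: the observation is a bookkeeping consequence of the table defining $\theta$ on the four letters, so I would simply state both implications together and point to the table. The only thing worth being careful about is asserting that $B(w), E(w), L(w), R(w)$ partition $\{1,\ldots,n-1\}$, which is immediate from the definition of the SU-word and is already implicit in Observation~\ref{obs:rk}. With that, nothing further is needed.
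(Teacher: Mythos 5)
Your argument is correct and matches the paper's own justification: the paper also treats this as an immediate consequence of the table sending $b$ to an up step, $e$ to a down step, and both $l$ and $r$ to the same across step, so that $\theta$ remembers exactly the positions of $b$ and $e$ and nothing more. No gaps; the partition of positions into $B, E, L, R$ is indeed the only bookkeeping point, and it is immediate from the definition of the SU-word.
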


That is, the only places in which $w(p)$ and $w(p')$ differ are that some letters $l$ and $r$ are exchanged. As previously observed, this means that $p$ and $p'$ differ only in that some up-down steps between odd vertices are replaced by down-up steps.

\subsection{Refined Catalan recurrence and continued fraction expansion} 

We will now show the polynomials $\Dy(n;q,t)$ satisfy the refined Catalan recurrence of Theorem \ref{thm:recurrence}. We wish to show \[ \Dy(n;q,t) = \sum_{k=0}^{n-1} (qt)^k \Dy(k; q, 1/t)\Dy(n-1-k; q, t).\]

The proof is based on the usual combinatorial identity  for Dyck paths: \[ \Dy(n) \cong \bigcup_{k=0}^{n-1} \Dy(k)\times \Dy(n-1-k),\] whereby we decompose a Dyck path according to its point of first return to the $x$-axis. The path to the left of this point is associated to a Dyck path after removing its first up step and its last down step. The path to the right is untouched. See Figure \ref{fig:decomp}.

\begin{figure}
\[
\begin{array}{c}
\begin{xy}0;<.5cm,0cm>:
(-1,2)*{p=}, (3,3); (0,0) **@{-}, (4,2) **@{-}, (6,4); (4,2) **@{-}, (10,0) **@{-}, (12,2); (10,0) **@{-}, (14,0) **@{-}, (15,1); (14,0) **@{-}, (16,0) **@{-}, (10,-1); (10,5) **@{--}, (10,1); (0,1) **@{--},(0,0)*{\bullet}, (1,1)*{\bullet}, (2,2)*{\bullet}, (3,3)*{\bullet},  (6,4)*{\bullet}, (7,3)*{\bullet},  (12,2)*{\bullet}, (15,1)*{\bullet}, (16,0)*{\bullet}, (2,0)*{\bullet}, (3,1)*{\bullet}, (4,2)*{\bullet}, (5,3)*{\bullet}, (4,0)*{\bullet}, (5,1)*{\bullet}, (6,2)*{\bullet}, (6,0)*{\bullet}, (7,1)*{\bullet}, (8,2)*{\bullet},  (8,0)*{\bullet}, (9,1)*{\bullet},   (10,0)*{\bullet}, (11,1)*{\bullet}, (12,0)*{\bullet}, (13,1)*{\bullet}, (14,0)*{\bullet}
\end{xy}\\ \\
\updownarrow \\ \\
(p_1,p_2) = \left( 
\begin{array}{c}\begin{xy}0;<.5cm,0cm>:
(2,2); (0,0) **@{-}, (3,1) **@{-}, (5,3); (3,1) **@{-}, (8,0) **@{-}, (0,0)*{\bullet}, (1,1)*{\bullet}, (2,2)*{\bullet}, (2,0)*{\bullet}, (3,1)*{\bullet}, (4,2)*{\bullet}, (5,3)*{\bullet}, (4,0)*{\bullet}, (5,1)*{\bullet}, (6,2)*{\bullet}, (6,0)*{\bullet}, (7,1)*{\bullet},   (8,0)*{\bullet}, (9,0)*{,}
\end{xy}
\end{array}
\begin{array}{c}\begin{xy}0;<.5cm,0cm>:
(2,2); (0,0) **@{-}, (4,0) **@{-}, (5,1); (4,0) **@{-}, (6,0) **@{-}, (0,0)*{\bullet}, (1,1)*{\bullet}, (2,2)*{\bullet}, (2,0)*{\bullet}, (3,1)*{\bullet}, (4,0)*{\bullet}, (5,1)*{\bullet}, (6,0)*{\bullet}
\end{xy}
\end{array}
\right)
\end{array}
\]
\caption{Joining or decomposing Dyck paths according to the first return.}\label{fig:decomp}
\end{figure}
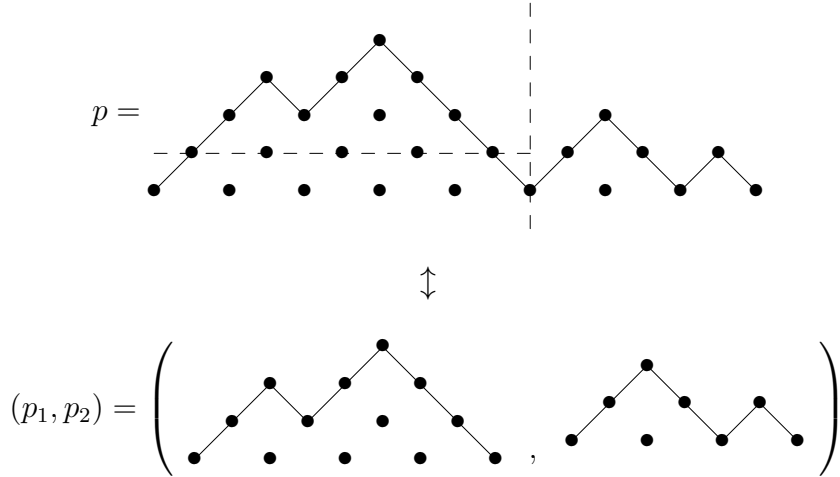

Fix a path $p \in \Dy(n)$ whose first return to the $x$-axis is after $2(k+1)$ steps. We let $p_1\in \Dy(k)$ be the path given by taking the part of this initial segment of $p$ and removing the first and last steps. We let  $p_2\in \Dy(n-1-k)$ be the part of $p$ beginning at the point of first return.

This process is clearly reversible. Let $p^+$ denote the path obtained by adding an up step at the beginning of $p$ and a downstep at the end of $p$. Then if we are given a pair of paths $p_1 \in \Dy(k)$ and $p_2 \in \Dy(n-1-k)$, we create a new path by identifying the last node of $p_1^+$ with the first node of $p_2$. The new path has a total of $2k+2 + 2(n-1-k) = 2n$ steps, and the point of first return in the new path occurs after $2(k+1)$ steps.

The decomposition by first return clearly respects both area and rank in that \[ \area(p) = \area(p_1^+) + \area(p_2) \quad \mbox{ and } \quad  \rk(p) = \rk(p_1^+) + \rk(p_2).\] We can be more precise, however, as shown in the following result. 

\begin{prp}\label{prp:decomp}
Suppose a Dyck path $p \in \Dy(n)$ has its first return after $2(k+1)$ steps, and hence decomposes as $p = p_1^+ p_2$, with $p_1 \in \Dy(k)$ and $p_2 \in \Dy(n-1-k)$.  Then \begin{align*}
 \area(p_1^+) &= k+\area(p_1), \\
  \rk(p_1^+) &= k-\rk(p_1),
  \end{align*} and hence
\begin{align*}
\area(p) &= k+\area(p_1) + \area(p_2),\\
\rk(p) &= k-\rk(p_1) + \rk(p_2).
\end{align*}
\end{prp}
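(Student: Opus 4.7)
My plan is to reduce both formulas to two identities about the decorated path $p_1^+$, namely $\area(p_1^+) = k + \area(p_1)$ and $\rk(p_1^+) = k - \rk(p_1)$. Granting these, the formulas for $\area(p)$ and $\rk(p)$ follow immediately by additivity across the first-return decomposition. Since $p_1^+$ and $p_2$ share only the point $(2(k+1),0)$ and enclose disjoint regions above the $x$-axis, $\area(p) = \area(p_1^+) + \area(p_2)$. For the rank, the first-return touchdown prevents any odd point of $p_1^+$ from being joined to any odd point of $p_2$ under Stump's rule (the path would have to dip below any common level between them), so the block count of $\phi(p)$ is exactly the sum of the block counts of $\phi(p_1^+)$ and $\phi(p_2)$, giving $\rk(p) = \rk(p_1^+) + \rk(p_2)$.

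The area identity is a direct lattice-point count. The path $p_1^+$ is obtained from $p_1$ by translating by $(1,1)$ and attaching an initial up-step and a final down-step. This translation sends each lattice point counted by $\area(p_1)$ to a lattice point below the shifted $p_1$ and on or above $y=1 > 0$, contributing $\area(p_1)$ points. The remaining unused lattice points below $p_1^+$ lie on the $x$-axis itself, and these are precisely the $k$ points $(2,0), (4,0), \ldots, (2k,0)$, since the endpoints $(0,0)$ and $(2(k+1),0)$ are on the path and the interior of $p_1^+$ stays on or above $y=1$. Hence $\area(p_1^+) = k + \area(p_1)$.

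For the rank identity I would use the SU-word encoding. By Observation~\ref{obs:rk}, $\rk(p) = |B(w(p))| + |R(w(p))|$, and the letters $b, r$ are precisely those whose two-step segment of the path begins with an up-step; such a segment begins at step $2i$ of the path for $i = 1, \ldots, n-1$. Since the final step (position $2n$) of any Dyck path is always a down-step, $\rk(p)$ equals $u_e(p)$, the total number of up-steps at even-indexed positions of $p$. Now the steps of $p_1^+$ are an initial up, then the steps of $p_1$ in order, then a final down, so the even-indexed steps of $p_1^+$ are precisely the odd-indexed steps of $p_1$ together with the appended down-step; hence $\rk(p_1^+) = u_e(p_1^+) = u_o(p_1)$. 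Since $p_1 \in \Dy(k)$ has $k$ up-steps in total, $u_o(p_1) = k - u_e(p_1) = k - \rk(p_1)$. The main technical point is in establishing $\rk(p) = u_e(p)$ by carefully unpacking the SU-word-to-Dyck-path correspondence; with that in hand, the rest is routine bookkeeping.
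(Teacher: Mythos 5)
Your proof is correct, and for the key step $\rk(p_1^+)=k-\rk(p_1)$ it takes a noticeably cleaner route than the paper. The paper also reduces everything to Observation \ref{obs:rk}, but it then analyzes the ``raising'' operation $p_1 \mapsto p_1^+$ letter-by-letter on SU-words, producing a $5\times 5$ transition table for consecutive letter pairs and reading off that $w_i\in\{b,l\}$ if and only if $w'_{i+1}\in\{b,r\}$. You instead translate Observation \ref{obs:rk} once and for all into the statement $\rk(p)=u_e(p)$ (the letters $b,r$ are exactly those whose two-step segment, occupying positions $2i,2i+1$, opens with an up-step, and position $2n$ is always a down-step), after which the identity is a one-line parity shift: prepending a step turns the odd positions of $p_1$ into the even positions of $p_1^+$, and the complementary count among the $k$ up-steps of $p_1$ gives $k-\rk(p_1)$. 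This is really the conceptual explanation of why the paper's table works, but it avoids the case analysis entirely. Your treatment of the area identity matches the paper's lattice-point count, and you also supply the additivity of rank across the first return (no block of $\phi(p)$ can straddle the touchdown at $(2k+2,0)$), which the paper asserts as clear without argument. Everything checks out.
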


\begin{proof}
It is quite easy to see that $\area(p_1^+) = k+\area(p_1)$; apart from $(0,0)$ and $(2k+2,0)$, there are precisely $k$ lattice points in $p_1^+$ that lie below the line $y=1$.

We will now prove that $\rk(p^+_1) = k-\rk(p_1)$ by considering SU-words. 

Recall from Observation \ref{obs:rk} that the rank of a Dyck path is the number of letters $b$ and $r$ in its SU-word. Now consider how ``raising" a path affects SU-words. For instance, suppose $l$ and $b$ are consecutive letters in an SU-word. Then in the corresponding part of the Dyck path, we see: \[ \begin{xy}0;<.5cm,0cm>:
(0,-1)*{\circ};  (1,0)*{\bullet}, (-1,-2)*{\bullet} **@{-}, (2,1)*{\circ} **@{-}, (-2,-1)*{\circ}; (-1,-2) **@{-}, (-1,-1)*{l}, (1,-1)*{b}, 
\end{xy}\] However, in adding a new up step at the beginning of the path and a new down step at the end, the odd and even nodes get swapped (white for black), and hence we see the gap between an $l$ and a $b$ in a path $p$ would correspond to the letter $b$ (between the black nodes above) in the path $p^+$. 

By going through all pairs of letters  we can see how the raising operator behaves with SU-words. In the array below, we mean that if $cd$ are consecutive letters in $w(p)$, we replace them in $w(p^+)$ with the letter in the row indexed by $c$, column indexed by $d$.
\[ \begin{array}{ c | c c c c c}
 & b & e & l & r & \cdot\\
 \hline
 b & b & r & r & b & \\
 e & l & e & e & l & e\\
 l & b & r & r & b & r\\
 r & l & e & e & l & e\\
 \cdot & b &  & r & b & r
 \end{array}\]  We use a $\cdot$ on the far left and far right to indicate the transitions at the beginning and end of the word. Notice that a word cannot begin with an $e$ nor can it end with a $b$. So, for example, if $w(p) = rblre$ we look at the gaps between the letters and use the array to find: \[ \begin{array}{ c c c c c c c c c c c c c}
 \cdot & & r & & b & & l & & r & & e & & \cdot \\
  & \downarrow & & \downarrow & & \downarrow & & \downarrow & & \downarrow & & \downarrow\\
 & b & & l & & r & & b & & e & & e
 \end{array}\] and so $w(p^+) = blrbee$.

Now, as in the statement of the proposition, suppose $p_1 \in \Dy(k)$, and let $w=w(p_1) = w_1\cdots w_{k-1}$, $w'= w(p_1^+) = w_1' \cdots w'_k$. Then by looking at the transformation array above, we see that $w'_1 \in \{b,r\}$ and for $1\leq i \leq k-1$, $w_i \in \{b, l\}$ if and only if $w'_{i+1} \in \{b,r\}$. Hence,
\begin{align*} 
\rk(p_1^+) &= |B(w')|+|R(w')|, \\
       &= |B(w)|+|L(w)|+1, \\
       &= k-|B(w)|-|R(w)|, \\ 
	&= k- \rk(p_1),
\end{align*}
where the final two equalities are direct applications of Observation \ref{obs:rk}.

The proposition now follows.
\end{proof}

Let us now prove Theorem \ref{thm:recurrence}.

\begin{proof}[Proof of Theorem \ref{thm:recurrence}]
In light of Proposition \ref{prp:decomp}, we express the polynomial as follows.
\begin{align*}
\Dy(n;q,t) &= \sum_{p \in \Dy(n)} q^{\area(p)}t^{\rk(p)}\\
 &=\sum_{k=0}^{n-1} \sum_{(p_1,p_2) \in \Dy(k)\times \Dy(n-1-k)} q^{\area(p_1^+)+\area(p_2)}t^{\rk(p_1^+)+\rk(p_2)} \\
 &=\sum_{k=0}^{n-1} \left(\sum_{p_1 \in \Dy(k)} q^{\area(p_1^+)}t^{\rk(p_1^+)} \right)\cdot\left(\sum_{p_2 \in \Dy(n-1-k)} q^{\area(p_2)}t^{\rk(p_2)}\right)\\
 &=\sum_{k=0}^{n-1} \left(\sum_{p_1 \in \Dy(k)} q^{k+\area(p_1)} t^{k-\rk(p_1)}\right)\cdot\Dy(n-1-k;q,t) \\
 &=\sum_{k=0}^{n-1} (qt)^k\cdot\left(\sum_{p_1 \in \Dy(k)} q^{\area(p_1)} t^{-\rk(p_1)}\right)\cdot\Dy(n-1-k;q,t)\\
 &=\sum_{k=0}^{n-1} (qt)^k\Dy(k;q,1/t)\Dy(n-1-k;q,t),
\end{align*}
as desired.
\end{proof}

Carlitz \cite{Car74} obtained a continued fraction expression for the ordinary generating function 
\[
\Dy(q,z)=\sum_{n\geq 0}\sum_{p\in\Dy(n)}q^{\area(p)}z^{n}.
\]
To be precise, he showed 
\begin{equation}\label{eq:continuedfraction}
\Dy(q,qz) = \sum_{n\geq 0}\sum_{p\in\Dy(n)}q^{\area'(p)}z^{n}=\displaystyle\frac{1}{1-\displaystyle\frac{qz}{1-\displaystyle\frac{q^{2}z}{1-\displaystyle\frac{q^{3}z}{1-\displaystyle\frac{q^{4}z}{\ddots}}}}},
\end{equation}
where $\area'(p):=\area(p)+n$ if $p\in\Dy(n)$ (see also~\cite{F80}).

\begin{rmk}
The $\area'$ statistic counts the number of $1/\sqrt{2}\times1/\sqrt{2}$ diamonds that can fit below the path if they are allowed to hang halfway below the $x$-axis. So a path $p\in \Dy(n)$ with $\area'(p)=k$ corresponds to a \emph{fountain of coins} with $n$ coins in the bottom row and $k$ coins in total (see ~\cite{BCS02,OW88}). The $\area'$ statistic, as well as the fountain of coins problem, provided combinatorial interpretations for the \emph{Rogers-Ramanujan} continued fraction (see~\cite{A79, A98}).
\end{rmk}

We obtain an analogous result for the generating function that tracks the rank and area of elements of $\Dy(n)$. Define the function \[\Dy(q,t,z)=\sum_{n\geq 0}\Dy(n;q,t)z^n = \sum_{n\geq 0}\sum_{p \in \Dy(n)} q^{\area(p)}t^{\rk(p)}z^n.\]

\begin{prp}\label{prp:cf}
The ordinary generating function $\Dy(q,t,z)$ has the following continued fraction expansion:
\begin{equation*}
\Dy(q,t,z) =\frac{1}{1-\displaystyle\frac{z}{1-\displaystyle\frac{qtz}{1-\displaystyle\frac{q^2z}{1-\displaystyle\frac{q^3tz}{1-\displaystyle\frac{q^4z}{1-\displaystyle\frac{q^5tz}{\ddots}}}}}}},
\end{equation*}
i.e., for $k\geq 1$, the numerator of level $2k$ is $q^{2k-1}tz$ and the numerator of level $2k-1$ is $q^{2k-2}z$.
\end{prp}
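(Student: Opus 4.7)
The plan is to convert Theorem \ref{thm:recurrence} into a functional equation for the two-variable generating function $F_t(z) := \Dy(q,t,z)$ and then iterate it to produce the continued fraction. Write $F_s(z) = \sum_{n\geq 0} \Dy(n;q,s) z^n$ for any parameter $s$. Multiplying the recurrence $\Dy(n;q,t) = \sum_{k=0}^{n-1} (qt)^k \Dy(k;q,1/t)\Dy(n-1-k;q,t)$ by $z^n$, summing over $n\geq 1$, and substituting $n = k+m+1$, I obtain
\[
F_t(z) \;=\; 1 + z\, F_{1/t}(qtz)\, F_t(z) \;=\; \frac{1}{1 - z F_{1/t}(qtz)}.
\]

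The same recurrence applied with $t$ replaced by $1/t$ gives $F_{1/t}(z) = 1/(1 - z F_t(qz/t))$, and substituting $z \mapsto qtz$ yields
\[
F_{1/t}(qtz) \;=\; \frac{1}{1 - qtz\, F_t(q^2 z)}.
\]
Plugging this back into the first relation gives
\[
F_t(z) \;=\; \cfrac{1}{1 - \cfrac{z}{1 - qtz\, F_t(q^2 z)}}.
\]
This already produces the first two levels of the claimed continued fraction (numerators $z$ and $qtz$), with an instance of $F_t$, evaluated at $q^2 z$, at the bottom.

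Now iterate: applying exactly the same two-step expansion to $F_t(q^2 z)$ produces two more levels with numerators $q^2 z$ and $q^3 tz$ and pushes the unresolved $F_t$-term down to $F_t(q^4 z)$. In general, after $N$ rounds of this procedure the numerator at level $2k-1$ is $q^{2k-2}z$ and at level $2k$ is $q^{2k-1}tz$ for $1 \leq k \leq N$, matching the pattern in the statement. The remaining $F_t(q^{2N}z)$ sitting at the bottom has constant term $1$ and every nonzero coefficient carries a factor of $q^{2Nn} z^n$ with $n \geq 1$; combined with the factor $z$ appearing in every odd-level numerator, truncating by replacing $F_t(q^{2N}z)$ with $1$ agrees with $F_t(z)$ modulo a suitably high power of $z$. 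Letting $N \to \infty$ in $\mathbb{Z}[q,t,t^{-1}][[z]]$ gives the claimed expansion.

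The argument is essentially a mechanical consequence of Theorem \ref{thm:recurrence}, so there is no real obstacle beyond keeping the bookkeeping of the $q$- and $t$-exponents straight at each level and phrasing the formal-power-series convergence carefully. The only mildly subtle point is that the intermediate series $F_{1/t}(z)$ involves $t^{-1}$, so one should work over $\mathbb{Z}[q,t^{\pm 1}][[z]]$ throughout the derivation (the final answer of course lies in $\mathbb{Z}[q,t][[z]]$, since pairs of $t^{-1}$'s cancel between consecutive levels).
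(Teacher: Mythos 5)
Your proposal is correct and follows essentially the same route as the paper: deriving the functional equation $\Dy(q,t,z) = 1 + z\,\Dy(q,1/t,qtz)\,\Dy(q,t,z)$ from Theorem \ref{thm:recurrence}, applying it once more with $t \mapsto 1/t$ and $z \mapsto qtz$ to get $\Dy(q,1/t,qtz) = 1/(1-qtz\,\Dy(q,t,q^2z))$, and iterating. Your added remarks on formal-power-series convergence and on working over $\mathbb{Z}[q,t^{\pm 1}][[z]]$ are sound and slightly more careful than the paper's ``continuing in this way'' phrasing, but the argument is the same.
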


\begin{proof}
From Theorem \ref{thm:recurrence}, we get, for $n\geq 1$, \[\Dy(n;q,t) = \sum_{k=0}^{n-1} (qt)^k \Dy(k; q, 1/t)\Dy(n-1-k; q, t).\]

Hence, 
\begin{align*}
 \Dy(q,t,z) &= \sum_{n\geq 0} \Dy(n;q,t) z^n \\
  &= 1 + \sum_{n\geq 1} \left(\sum_{k=0}^{n-1} (qt)^k \Dy(k; q, 1/t)\Dy(n-1-k; q, t)\right) z^n \\
  &= 1 + z\cdot\sum_{n\geq 1} \sum_{k=0}^{n-1} \left( \Dy(k; q, 1/t)(qtz)^k\right)\left(\Dy(n-1-k; q, t)z^{n-1-k}\right) \\
  &= 1 + z \Dy(q,1/t,qtz)\Dy(q,t,z).
\end{align*}

From here, we have \begin{equation}\label{eq:cf1} \Dy(q,t,z) = \frac{1}{1-z\Dy(q,1/t,qtz)}.\end{equation}
But also, \begin{equation}\label{eq:cf2} \Dy(q,1/t,qtz) = \frac{1}{1-qtz\Dy(q,t,q^2z)},\end{equation} and by putting \eqref{eq:cf2} into \eqref{eq:cf1}, we have \[ \Dy(q,t,z) = \frac{1}{1-\displaystyle\frac{z}{1-qtz\Dy(q,t,q^2z)}}.\]
Continuing in this way, we find:
\begin{align*}
\Dy(q,t,z)&=\frac{1}{1-\displaystyle\frac{z}{1-qtz\Dy(q,t,q^2z)}}\\
&=\frac{1}{1-\displaystyle\frac{z}{1-\displaystyle\frac{qtz}{1-q^2z\Dy(q,1/t,q^3tz)}}}
\\
&=\frac{1}{1-\displaystyle\frac{z}{1-\displaystyle\frac{qtz}{1-\displaystyle\frac{q^2z}{1-q^3tz\Dy(q,t,q^4z)}}}}\\
&\quad\vdots\\
&=\frac{1}{1-\displaystyle\frac{z}{1-\displaystyle\frac{qtz}{1-\displaystyle\frac{q^2z}{1-\displaystyle\frac{q^3tz}{1-\displaystyle\frac{q^4z}{1-\displaystyle\frac{q^5tz}{\ddots}}}}}}},
\end{align*}
as desired.
\end{proof}

If we set $q=1$, we get
\begin{align*} 
\Dy(1,t,z) &= \frac{1}{1-\displaystyle\frac{z}{1-\displaystyle\frac{tz}{1-\displaystyle\frac{z}{1-\displaystyle\frac{tz}{1-\displaystyle\frac{z}{1-\displaystyle\frac{tz}{\ddots}}}}}}}\\
&=\frac{1}{1-\displaystyle\frac{z}{1-tz\Dy(1,t,z)}},
\end{align*}
which in turn gives
\[tz(\Dy(1,t,z))^2 -(1+z(t-1))\Dy(1,t,z)  +1=0.\]
Solving for $\Dy(1,t,z)$ gives the generating function for the Narayana numbers:
\[ \Dy(1,t,z) = \frac{1+z(t-1)-\sqrt{1-2z(t+1)+z^2(t-1)^2}}{2tz}.\]

\subsection{Simion and Ullman's symmetric boolean decomposition}

Suppose $(P,\leq)$ is a graded poset of rank $n$, and let $B_j$ denote the boolean algebra on $j$ elements. 

\begin{defn}[Symmetric boolean decomposition]\label{def:sbd}
A \emph{symmetric boolean decomposition} of $(P,\leq)$ is a partition of the elements of $P$, $\{ P_1,\ldots,P_k\}$, such that for each $i = 1,\ldots,k$:
\begin{itemize}
\item there is a $j$, $0\leq j\leq n/2$, such that $|P_i|=2^{n-2j}$,
\item there is a unique minimum element of $P_i$ and its rank is $j$,
\item the boolean algebra $B_{n-2j}$ is isomorphic to a subposet of the induced poset $(P_i, \leq)$ in $(P,\leq)$. 
\end{itemize}
\end{defn}

Loosely speaking, a symmetric boolean decomposition is a partition of a poset into disjoint boolean intervals whose middle ranks coincide with the middle rank of the entire poset.

Simion and Ullman used their SU-word encoding to give a constructive proof that the lattice $(NC(n),\leq)$ has a symmetric chain decomposition. In fact, as they themselves observed, they obtained something stronger: a symmetric boolean decomposition. We summarize their result with the following Theorem. (See Sections 2 and 3 of \cite{SU91}. See also \cite{PetersenShards} for a different symmetric boolean decomposition.)

\begin{thm}[\cite{SU91}]\label{thm:SBD}
For any $n\geq 1$, we have the following partition of the noncrossing partition lattice. In terms of SU-words, \[ SU(n) = \bigcup_{m \in \M(n)} \{ w \in SU(n) : \theta(w) = m\}.\] In terms of Dyck paths, \[ \Dy(n) = \bigcup_{m \in \M(n)} \B(m).\] Moreover, if $\rk(m) = i$, then as an induced subposet of $(\Dy(n),\leq)$, \[ (\B(m),\leq) \cong B_{n-1-2i}.\]
\end{thm}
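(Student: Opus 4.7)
My plan is to use the SU-encoding to identify each fiber $\B(m)$ with a cube $\{l,r\}^{n-1-2i}$ and then check that the induced partial order agrees with the componentwise one. The partition claims $\Dy(n) = \bigcup_{m \in \M(n)} \B(m)$ and its SU-word version are tautological: they express that $\theta$ is a well-defined function, and it is surjective since $\theta\circ\rho = \mathrm{id}_{\M(n)}$. So the content is the Boolean algebra claim.

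I would first handle cardinality. By Observation \ref{obs:classes}, two SU-words in the same fiber agree on their $b$- and $e$-positions, so $\B(m)$ is parameterized by the assignment of $l$ versus $r$ in the remaining $|L(w)|+|R(w)|$ positions. By Observation \ref{obs:rk}, this count is $n-1-2|B(w)|$. Since the SU-word of $\rho(m)$ uses only $b$, $e$, $l$ letters (each Motzkin horizontal becomes a down-up, i.e., an $l$), Observation \ref{obs:rk} gives $\rk(\rho(m)) = |B(w(\rho(m)))|$; every $w \in \B(m)$ inherits this same $B$-set, so $|B(w)| = \rk(m) = i$ and $|\B(m)| = 2^{n-1-2i}$. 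Now define $\psi : \B(m) \to B_{n-1-2i}$ by $\psi(p) = R(w(p))$, viewed as a subset of the fixed $(n-1-2i)$-element ``free" position set $L(w)\cup R(w)$. By Observation \ref{obs:classes}, $\psi$ is a bijection onto the power set.

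The heart of the argument is to show that, for $\pi,\pi' \in \B(m)$, $\pi\leq \pi'$ in $NC(n)$ if and only if $R(w(\pi))\subseteq R(w(\pi'))$. The forward direction is immediate: an $r$-letter at position $j$ means $j$ and $j+1$ share a block of $\pi$, and any coarsening $\pi'$ preserves this. For the converse, I induct on $|R(w(\pi'))\setminus R(w(\pi))|$ via a single-step move: if $w(\pi)_j=l$ and $w(\pi')_j=r$, then by definition of the $l$-letter, $j=\max(A)$ and $j+1=\min(A')$ for adjacent blocks $A,A'$ of $\pi$, and I define $\pi''$ by merging these to $A\cup A'$. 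Since $\max(A)<\min(A')$ and the noncrossing property forces every other block of $\pi$ to sit either entirely inside $[\min(A),\max(A')]$ or entirely outside it, $\pi''$ is noncrossing and covers $\pi$ in $NC(n)$.

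The main obstacle is checking that $\pi''\in\B(m)$, i.e., that this local merge changes only the $j$-th SU-letter. This reduces to a short case analysis: at each position $k\neq j$, the pair $(k,k+1)$ interacts with the merged block only when $k$ or $k+1$ lies in $A\cup A'$, and in every subcase the defining conditions for $b,e,l,r$ agree between $\pi$ and $\pi''$ because $\min(A\cup A')=\min(A)$ and $\max(A\cup A')=\max(A')$. Iterating this single-step move along any enumeration of $R(w(\pi'))\setminus R(w(\pi))$ produces a saturated chain $\pi=\pi_0\lessdot\pi_1\lessdot\cdots\lessdot\pi'$ within $\B(m)$. This proves both directions of the equivalence, so $\psi$ is an order-isomorphism onto $B_{n-1-2i}$, and simultaneously identifies each covering relation in $\B(m)$ with an adjacent-block merge, completing the proof.
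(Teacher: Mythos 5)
Your proof is correct. Note, though, that the paper does not actually prove this theorem: it is stated as a summary of Simion and Ullman's result, with the reader sent to Sections 2 and 3 of \cite{SU91}, and the only justification offered in the text is the one-line remark that each horizontal step of $m$ can independently be taken down-up (letter $l$) or up-down (letter $r$), together with Observation \ref{obs:classes}. What you have written is a self-contained proof along exactly the lines the paper gestures at: you use the SU-encoding to parameterize $\B(m)$ by the $R$-set on the $n-1-2i$ free positions (via Observations \ref{obs:rk} and \ref{obs:classes}), and then you supply the two verifications the paper leaves implicit --- that merging the block with maximum $j$ into the adjacent block with minimum $j+1$ preserves the noncrossing property and yields a cover, and that this merge alters only the $j$-th SU-letter, so the move stays inside the fiber. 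That second verification is the real content of the order-isomorphism claim, and your case analysis (same-block status can only change at position $j$; "largest/smallest in its block" status changes only for $j$ and $j+1$, which only affects position $j$) is sound. One small overstatement: Observation \ref{obs:classes} by itself gives only injectivity of $\psi$, not that every $l/r$ assignment on the free positions is realized by an element of $\B(m)$; but surjectivity follows either from iterating your merge move starting at $\rho(m)$, or directly from the Dyck-path picture (flipping a down-up to an up-down between odd vertices at height $\geq 1$ always yields a valid Dyck path), so nothing is missing in substance.
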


In particular, $(\B(m),\leq)$ is isomorphic to the boolean algebra on horizontal steps of $m$: each horizontal step can be chosen independently to be either down-up (letter $l$) or up-down (letter $r$). See Observation \ref{obs:classes}. We see an example of $\B(m)$ in Figure \ref{fig:thetam}, and in Figure \ref{fig:NC4} we see the entire symmetric boolean decomposition of $(\Dy(4),\leq)$ highlighted with bold lines.

We now consider the distribution of area and rank on $\B(m)$. 

\begin{prp}\label{prp:gam1}
Let $m \in \M(n)$ be a Motzkin path. Then, \[ \sum_{p \in \B(m)} q^{\area(p)}t^{\rk(p)} = q^{\area(m)}t^{\rk(m)}(1+qt)^{n-1-2\rk(m)}.\]
\end{prp}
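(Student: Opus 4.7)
The plan is to exploit the structure of $\B(m)$ as a boolean algebra, as provided by Theorem \ref{thm:SBD} and Observation \ref{obs:classes}. More precisely, every element of $\B(m)$ is obtained from the minimum element $\rho(m)$ by independently toggling some subset of the horizontal steps of $m$ between the ``down-up'' shape (SU-letter $l$) and the ``up-down'' shape (SU-letter $r$). So the sum over $\B(m)$ should factor as a product of $(1+\text{single-flip contribution})$ over the horizontal steps of $m$, and the goal is to show that each single-flip contribution is exactly $qt$.

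First I would observe that a Motzkin path of order $n$ has $n-1$ total steps, of which $\rk(m)$ are ``up'' and $\rk(m)$ are ``down,'' leaving exactly $n-1-2\rk(m)$ horizontal steps; this matches the desired exponent of $(1+qt)$. Next, since $\rho(m)$ converts each horizontal step of $m$ into a down-up segment (SU-letter $l$), the minimum element of $\B(m)$ has weight $q^{\area(\rho(m))}t^{\rk(\rho(m))} = q^{\area(m)}t^{\rk(m)}$ by the definitions $\area(m):=\area(\rho(m))$ and $\rk(m):=\rk(\rho(m))$.

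The main step is to analyze the effect of a single flip $l \mapsto r$ on a pair of consecutive odd vertices at height $2j-1$. Locally, the path segment that previously ran $(2i-1,2j-1)\to(2i,2j-2)\to(2i+1,2j-1)$ now runs $(2i-1,2j-1)\to(2i,2j)\to(2i+1,2j-1)$. For the rank, Observation \ref{obs:rk} gives $\rk(p) = |B(w)| + |R(w)|$, and we have traded one $l$ for one $r$, so the rank increases by 1. For the area, only the column $x = 2i$ is affected: the lattice point $(2i,2j-2)$ was on the path (hence not counted) and now lies strictly below the path (hence counted), while $(2i,2j)$ was strictly above the path (not counted) and now lies on the path (still not counted); every other lattice point has its status unchanged. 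Thus the area also increases by exactly 1, so the weight is multiplied by $qt$. Summing over all subsets $S$ of the $n-1-2\rk(m)$ horizontal steps gives
\[
\sum_{p \in \B(m)} q^{\area(p)}t^{\rk(p)} = q^{\area(m)}t^{\rk(m)}\sum_{S} (qt)^{|S|} = q^{\area(m)}t^{\rk(m)}(1+qt)^{n-1-2\rk(m)},
\]
as claimed. The only delicate point is the careful bookkeeping of unused lattice points in the column $x=2i$; once one draws the two local configurations this is immediate.
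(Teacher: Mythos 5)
Your proof is correct and follows essentially the same route as the paper: identify $\rho(m)$ as the minimum of the boolean algebra $\B(m)$, observe that each $l\to r$ flip of a horizontal step multiplies the weight by $qt$ (rank via Observation \ref{obs:rk}, area via the local lattice-point count), and expand over subsets of the $n-1-2\rk(m)$ horizontal steps. Your column-by-column bookkeeping for the area increment is simply a more explicit version of what the paper asserts as ``clearly increases area by one.''
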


\begin{proof}
Given a Motzkin path $m \in \M(n)$, we have \[ \area(m) = \area(\rho(m)) \quad \mbox{ and } \quad \rk(m) = \rk(\rho(m)) = |B(w(\rho(m)))|.\] Moreover, each of the horizontal steps of $m$ becomes a down-up step between odd vertices of $\rho(m)$. In terms of SU-words, we can get any element of $\B(m)$ from $\rho(m)$ by switching a letter $l$ for a letter $r$. By Observation \ref{obs:rk}, the rank increases by one for each letter $r$. In terms of Dyck paths, swapping an $l$ for an $r$ amounts to: \[ 
\begin{xy}0;<.5cm,0cm>:
(1,-1)*{\bullet}; (0,0)*{\circ} **@{-}, (2,0)*{\circ} **@{-}, (1,0)*{l}
\end{xy} \quad \longleftrightarrow \quad 
\begin{xy}0;<.5cm,0cm>:
(1,1)*{\bullet}; (0,0)*{\circ} **@{-}, (2,0)*{\circ} **@{-}, (1,0)*{r}
\end{xy},
\] and this operation on Dyck paths clearly increases area by one.

Hence, for each swap $l \to r$, we increase both area and rank by one. The result follows.
\end{proof}

Together, Theorem \ref{thm:SBD} and Proposition \ref{prp:gam1} establish the refined $\gamma$-nonnegativity for $\Dy(n;q,t)$ claimed in Theorem \ref{thm:main}.

\section{Further questions}\label{sec:further}

We now discuss some related questions.

\subsection{The Bandlow-Killpatrick bijection}\label{sec:BKS}

Stump's bijection $\phi$ between Dyck paths and permutations is not the first to take the area of a path to the length of a permutation. In \cite{BK01}, Bandlow and Killpatrick give the following bijection between Dyck paths and 312-avoiding permutations, i.e., permutations for which there is a triple $i < j < k$ with $\sigma(j) < \sigma(k) < \sigma(i)$.

We fill in the boxes under the Dyck path with simple transpositions ($s_i = (i, i+1)$) as follows:
\[ \begin{xy}0;<.5cm,0cm>:
(3,3); (0,0) **@{-}, (4,2) **@{-}, (6,4); (4,2) **@{-}, (10,0) **@{-}, (2,0); (1,1) **@{--}, (4,2) **@{--}, (4,0); (2,2) **@{--}, (7,3) **@{--}, (6,0); (4,2) **@{--}, (8,2) **@{--}, (8,0); (5,3) **@{--}, (9,1) **@{--}, (0,0)*{\bullet}, (1,1)*{\bullet}, (2,2)*{\bullet}, (3,3)*{\bullet},  (6,4)*{\bullet}, (2,0)*{\bullet}, (3,1)*{\bullet}, (4,2)*{\bullet}, (5,3)*{\bullet}, (4,0)*{\bullet}, (5,1)*{\bullet}, (6,2)*{\bullet}, (6,0)*{\bullet}, (7,1)*{\bullet}, (8,2)*{\bullet}, (8,0)*{\bullet}, (9,1)*{\bullet}, (10,0)*{\bullet}, (7,3)*{\bullet}, (2,1)*{s_1}, (3,2)*{s_1}, (4,1)*{s_2}, (5,2)*{s_2}, (6,3)*{s_2}, (6,1)*{s_3}, (7,2)*{s_3}, (8,1)*{s_4}, (5,4)*{\searrow}, (4,3)*{\searrow}, (2,3)*{\searrow}, (1,2)*{\searrow}
\end{xy},
\]
then we form a word by reading the entries in the boxes down the diagonals indicated from right to left, e.g., \[ s_2 s_3 s_4 \,\, s_2 s_3 \,\, s_1 s_2 \,\, s_1 = 35421.\] 

Bandlow and Killpatrick prove that such a word is reduced, i.e., that length is equal to area, and moreover that the permutation thus produced avoids the pattern 312. 

Similarly, Stump \cite[Section 2.2]{Stump} gives a bijection with 231-avoiding permutations by the following method of filling boxes and reading generators:
\[ \begin{xy}0;<.5cm,0cm>:
(3,3); (0,0) **@{-}, (4,2) **@{-}, (6,4); (4,2) **@{-}, (10,0) **@{-}, (2,0); (1,1) **@{--}, (4,2) **@{--}, (4,0); (2,2) **@{--}, (7,3) **@{--}, (6,0); (4,2) **@{--}, (8,2) **@{--}, (8,0); (5,3) **@{--}, (9,1) **@{--}, (0,0)*{\bullet}, (1,1)*{\bullet}, (2,2)*{\bullet}, (3,3)*{\bullet},  (6,4)*{\bullet}, (2,0)*{\bullet}, (3,1)*{\bullet}, (4,2)*{\bullet}, (5,3)*{\bullet}, (4,0)*{\bullet}, (5,1)*{\bullet}, (6,2)*{\bullet}, (6,0)*{\bullet}, (7,1)*{\bullet}, (8,2)*{\bullet}, (8,0)*{\bullet}, (9,1)*{\bullet}, (10,0)*{\bullet}, (7,3)*{\bullet}, (2,1)*{s_4}, (3,2)*{s_4}, (4,1)*{s_3}, (5,2)*{s_3}, (6,3)*{s_3}, (6,1)*{s_2}, (7,2)*{s_2}, (8,1)*{s_1}, (8,3)*{\longleftarrow}, (9,2)*{\longleftarrow}, (10,1)*{\longleftarrow}
\end{xy},
\] which gives \[s_3 \,\, s_2s_3s_4 \,\, s_1s_2s_3s_4 = 54213. \]

It would be very interesting to find a statistic $s$ for 312-avoiding permutations such that $\rk(p)$ equals $s(\sigma)$. Let $S_n(312)$ denote the set of 312-avoiding (resp. 231-avoiding) permutations of $n$. 

\begin{ques} 
Can we find a statistic $s$ such that \[ \sum_{p \in \Dy(n)} q^{\area(p)} t^{\rk(p)} = \sum_{\sigma \in S_n(312)} q^{\ell(\sigma)} t^{s(\sigma)} \quad \left(=\sum_{\sigma \in S_n(231)} q^{\ell(\sigma)} t^{s(\sigma)}\right)?\]
\end{ques}

We remark that it is known that \[\sum_{\sigma \in S_n(312)} t^{d(\sigma)} =  \sum_{\sigma \in S_n(231)} t^{d(\sigma)} = \sum_{p \in \Dy(n)} t^{\rk(p)},\] where $d(\sigma) = |\{ i : \sigma(i) > \sigma(j) \}|$ is the number of \emph{descents} of $\sigma$. However, $s\neq d$, since, for example, the long element also has the greatest number of descents. Other guesses for ``Eulerian" statistics, like excedance number, ascents, inverse descents, and inverse ascents are quickly ruled out as well. We should note, however, that since the sum is only over 312-avoiding (resp. 231-avoiding) permutations there is no reason to expect $s(\sigma)$ to give the Eulerian distribution over all of $S_n$.

\subsection{Inversions and excedances in the symmetric group}\label{sec:excinv}

It is straightforward to prove that for $\sigma \in [e, (12\cdots n)]$, reflection length equals \emph{excedance number}, i.e., we have $\ell'(\sigma) =\exc(\sigma) := |\{ i : \sigma(i) > i\}|$.

As we can see from Theorem \ref{thm:main} and Corollary \ref{cor:ll'}, the distribution of inversions and excedances for permutations in the interval $[e,(12\cdots n)]$ exhibits $\mathbb{Z}_{\geq 0}[q]$ $\gamma$-nonnegativity, i.e.,
\[ \sum_{\sigma \in [e,(12\cdots n)]} q^{\inv(\sigma)-\exc(\sigma)}t^{\exc(\sigma)} = \sum_{0\leq j\leq (n-1)/2} \gamma_j(q) t^j(1+t)^{n-1-2j}.\]
It appears that the same sort of result holds when taking the sum over the entire symmetric group. Define \[ S_n(q,t) = \sum_{ \sigma \in S_n} q^{\inv(\sigma)} t^{\exc(\sigma)}.\]

As examples, we have:
\begin{align*}
 S_4(q,t) &= (1+qt)^3 + (2q^2+3q^3 + 2q^4+q^5)t(1+qt)\\
 S_5(q,t) &= (1+qt)^4 + (3q^2+ 5q^3 + 5q^4 + 3q^5 +q^6)t(1+qt)^2 \\
  &\qquad + (q^4 + 2q^5 + 2q^6+ 3q^7 + 4q^8 + 3q^9 + q^{10} )t^2
\end{align*}

\begin{conj}
There exist polynomials $\gamma_j(q)$ with nonnegative integer coefficients such that: \[ S_n(q,t/q)=\sum_{\sigma \in S_n} q^{\inv(\sigma)-\exc(\sigma)}t^{\exc(\sigma)} = \sum_{0\leq j \leq (n-1)/2} \gamma_j(q) t^j(1+t)^{n-1-2j}.\]
\end{conj}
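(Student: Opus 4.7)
The plan is to reduce the conjecture to a theorem of Shareshian and Wachs~\cite{SW10}, who proved that
\[
A_n(q,t) := \sum_{\sigma \in S_n} q^{\maj(\sigma) - \exc(\sigma)} t^{\exc(\sigma)}
\]
is $q$-$\gamma$-nonnegative, i.e., expands as $\sum_j \gamma_j(q)\, t^j(1+t)^{n-1-2j}$ with $\gamma_j(q) \in \mathbb{Z}_{\geq 0}[q]$.  Direct computation for small $n$ is consistent with the joint distributions of $(\inv,\exc)$ and $(\maj,\exc)$ agreeing on all of $S_n$, so it would suffice to prove the equidistribution $(\inv,\exc) \sim (\maj,\exc)$; granting this, the conjecture follows immediately with exactly the same $\gamma_j(q)$ as in Shareshian-Wachs.

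I would first attempt a bijective proof of the equidistribution.  Foata's second fundamental transformation $\Phi\colon S_n \to S_n$ sends $\inv$ to $\maj$ but generally destroys $\exc$, so a modification is needed.  One natural route is via Denert's statistic $\mathrm{den}$: Foata and Zeilberger provide a bijection establishing $(\maj,\exc) \sim (\mathrm{den},\exc)$, so constructing a statistic-matched bijection $\Psi$ with $\inv(\sigma) = \mathrm{den}(\Psi(\sigma))$ and $\exc(\sigma) = \exc(\Psi(\sigma))$ would finish the argument.  An alternative, more direct approach is to mimic the Foata-Strehl ``valley-hopping'' action used in~\cite{SW10}: that action partitions $S_n$ into orbits of size $2^{n-1-2j}$ with a distinguished representative of excedance $j$, and on orbits it preserves $\maj-\exc$.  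One would hope to show that the same action (or some twisted version of it) preserves the statistic $\inv-\exc$; the $\gamma_j(q)$ would then be realized as the $q$-inversion generating functions of the orbit representatives.

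The main obstacle is the following.  A single valley-hopping move changes $\exc$ by exactly $\pm 1$, but it changes $\inv$ by an amount that depends on the entries lying between the two positions involved in the hop.  Thus $\inv-\exc$ is \emph{not} \emph{a priori} orbit-invariant under the raw Foata-Strehl action, even though the small-case evidence suggests that some compatible refinement must exist.  Overcoming this will likely require either engineering a modified action whose hops absorb the extra inversion contributions, or producing the bijection $\Psi$ from the previous paragraph directly.  I expect this to be where the argument either succeeds or stalls.
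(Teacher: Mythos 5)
This statement is labeled a conjecture in the paper, and the paper gives no proof of it (the authors only note the symmetry of $S_n(q,t/q)$ in $t$, citing Clarke--Steingr\'imsson--Zeng, and observe that the \emph{analogous} statement for $(\maj,\exc)$ is a theorem of Shareshian and Wachs). So there is no argument in the paper to compare yours against; your proposal must stand on its own, and unfortunately its foundation is false. The entire reduction rests on the equidistribution of $(\inv,\exc)$ and $(\maj,\exc)$ over $S_n$. This does hold for $n\leq 4$, which is why your small-case checks look encouraging, but it fails at $n=5$. Concretely, consider the coefficient of $q^{7}t^{1}$. On the $\inv$ side, a permutation with exactly one excedance and $\inv=7$ must (checking the cases $\sigma(1)=5$, $\sigma(1)=4$, and $\sigma(1)=1$) be $\sigma=52341$, and this is the \emph{unique} such permutation, so the coefficient of $q^7t$ in $\sum q^{\inv}t^{\exc}$ is $1$. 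On the $\maj$ side, both $12543$ (descents at $3,4$, so $\maj=7$, excedance only at position $3$) and $52143$ (descents at $1,2,4$, so $\maj=7$, excedance only at position $1$) have $\exc=1$ and $\maj=7$, while no permutation with $\exc=1$ has $\maj\geq 8$; so the coefficient of $q^7t$ in $\sum q^{\maj}t^{\exc}$ is at least $2$. Hence no bijection $\Psi$ with $\inv(\sigma)=\maj(\Psi(\sigma))$ and $\exc(\sigma)=\exc(\Psi(\sigma))$ can exist, no modified valley-hopping action can preserve $\inv-\exc$ orbitwise with the Shareshian--Wachs orbit structure, and the conjectured $\gamma_j(q)$ cannot equal the Shareshian--Wachs ones. (This is also visible in the paper's own data: even after correcting the apparent typo in the displayed $S_5(q,t)$, whose coefficients sum to $100$ rather than $120$, the polynomial $\gamma_1(q)$ for $(\inv,\exc)$ does not match the one for $(\maj,\exc)$.)

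A secondary inaccuracy: the Foata--Zeilberger theorem on Denert's statistic asserts the equidistribution of $(\exc,\mathrm{den})$ with $(\des,\maj)$, not with $(\exc,\maj)$; the pair $(\exc,\maj)$ studied by Shareshian and Wachs has a genuinely different joint distribution from the classical Euler--Mahonian pair $(\des,\maj)$, so the proposed detour through $\mathrm{den}$ also does not connect the objects you need it to connect. The honest way to read the situation is that the statement really is open as far as this paper is concerned, and any proof will have to engage with the $(\inv,\exc)$ distribution directly rather than transporting the known result for $(\maj,\exc)$.
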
 

We remark that Shareshian and Wachs \cite{SW10} show that $\sum_{\sigma \in S_n} q^{\maj(\sigma)}t^{\exc(\sigma)}$ has an expansion of this type.

In work of Clarke, Steingr\'imsson, and Zeng \cite[Corollary 12]{CSZ97} it is shown that the polynomials $S_n(q,t/q)$ are symmetric in $t$, but of course $\gamma$-nonnegativity is stronger. (Note that their Table I has an error, in that the entry of row $k=3$, column $m=14$ should be a 4, not a 1.) We note that when $q=1$, we get the Eulerian polynomials, which are well-known to be $\gamma$-nonnegative with integer coefficients, a result first due to Foata and Sch\"utzenberger \cite{FS70}. 

\subsection{Noncrossing partitions of type B}\label{sec:B}

We now show that the noncrossing partitions of type B admit a symmetric boolean decomposition, though we have no companion to Theorem \ref{thm:main}.

\begin{defn}
A \emph{$B_{n}$-partition} is a partition $\pi$ of the set $[\pm n]:=\{\pm1,\pm2,\ldots,\pm n\}$ (also called a \emph{partition of type B}) into blocks $B_{1},B_{2},\ldots, B_{m}$ satisfying the following properties:
\begin{enumerate}
\item If $B$ is a block of $\pi$, then $-B$ is also a block of $\pi$
\item There is at most one block of the form $\{i,-i\}$, $i\in[n]$. This block is called the \emph{zero block} of $\pi$.
\end{enumerate}
\end{defn}	

One can represent $B_{n}$-partitions by placing the elements of $[\pm n]$ around a circle using the order $1,2,\ldots,n,-1,-2,\ldots,-n$ and then drawing chords between two numbers $a$ and $b$ if they belong to the same block and there are no elements between $a$ and $b$ (clockwise) that belong to the same block. A $B_{n}$-partition, or a noncrossing partition of type B, is said to be \emph{noncrossing} if the chords do not cross. Figure~\ref{fig:Bn-noncrossing} depicts two $B_{3}$-noncrossing partitions.

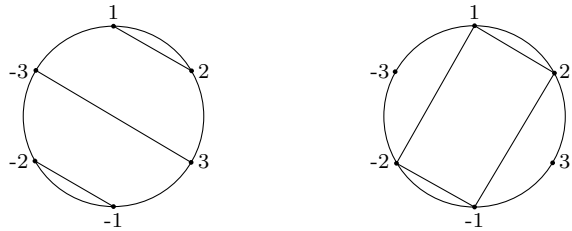
\begin{figure}[h!]
\begin{tikzpicture}[scale=.6]
\draw(1,3) circle (2cm);
\draw (1,5)-- (2.73,4.01);
\draw (-0.72,4.02)-- (2.72,1.98);
\draw (1,1)-- (-0.74,2.01);
\draw(9,5.32) node {\tiny 1};
\draw(1,5.32) node {\tiny 1};
\draw(9,.7) node {\tiny -1};
\draw(1,.7) node {\tiny -1};
\draw(6.9,4) node {\tiny -3};
\draw(-1.1,4) node {\tiny -3};
\draw(11,4) node {\tiny 2};
\draw(3,4) node {\tiny 2};
\draw(6.9,2) node {\tiny -2};
\draw(-1.1,2) node {\tiny -2};
\draw(11,2) node {\tiny 3};
\draw(3,2) node {\tiny 3};
\draw(9,3) circle (2.01cm);
\draw (7.27,1.96)-- (9,0.99);
\draw (9,0.99)-- (10.77,3.96);
\draw (10.77,3.96)-- (9,5.01);
\draw (9,5.01)-- (7.27,1.96);
\fill (1,5) circle (1.5pt);
\fill (2.73,4.01) circle (1.5pt);
\fill (2.72,1.98) circle (1.5pt);
\fill (-0.74,2.01) circle (1.5pt);
\fill (-0.72,4.02) circle (1.5pt);
\fill (9,5.01) circle (1.5pt);
\fill (10.77,3.96) circle (1.5pt);
\fill (10.73,1.97) circle (1.5pt);
\fill (9,0.99) circle (1.5pt);
\fill (7.27,1.96) circle (1.5pt);
\fill (7.24,3.99) circle (1.5pt);
\fill (1,1) circle (1.5pt);
\end{tikzpicture}
\caption{The $B_{3}$-noncrossing partitions $\{\{1,2\},\{-1,-2\}\{-3,3\}\}$ and $\{\{-1,-2,1,2\},\{3\},\{-3\}\}$.}
\label{fig:Bn-noncrossing}
\end{figure}

The poset of $B_{n}$-noncrossing partitions ordered by reverse refinement is a ranked lattice with the rank of $\pi$ being $n-\nzb(\pi)$. One says that $\nzb(\pi)=k$ if $\pi$ has $2k$ nonzero blocks (see~\cite[Proposition 2]{R97}). This lattice is denoted by $(NC_{B}(n),\leq)$.

In \cite[Proposition 6]{R97}, Reiner gives a bijection between noncrossing partitions of type B and what we will call $(L,R)$ pairs. That is, \[ NC_B(n) \leftrightarrow \{ (L,R) \in [n]\times [n] : |L|=|R|\}=P_B(n).\] 
We will describe the map $\eta: P_B(n) \to NC_B(n)$ of \cite[Proposition 6]{R97} with an example. Suppose $n=5$ and let $(L,R) = (\{2,3,4\}, \{1,4,5\})$. We write the string \[ 1 \quad 2 \quad 3 \quad 4 \quad 5 \quad -1 \quad -2 \quad -3 \quad -4 \quad -5,\] and for each $i$ in $L$ we put a left parenthesis just to the left of both $i$ and $-i$: \[ 1 \quad (2 \quad (3 \quad (4 \quad 5 \quad -1 \quad (-2 \quad (-3 \quad (-4 \quad -5.\] Next we place a right parenthesis just to the right of $j$ and $-j$ for every $j$ in $R$:  \[ 1) \quad (2 \quad (3 \quad (4) \quad 5) \quad -1) \quad (-2 \quad (-3 \quad (-4) \quad -5).\] The parenthesization (read cyclically) gives: \[ \{ \{ 1, -2\}, \{-1,2\}, \{3, 5\}, \{-3,-5\}, \{4\} , \{-4\}\} = \eta(L,R). \]

Under the bijection $\eta$, rank corresponds to $n-|L|$, and it is clear that if $\pi < \pi'$, then $L(\pi') \subset L(\pi)$ and $R(\pi') \subset R(\pi)$. However, the converse is not true, i.e., simply because $L' \subset L$ and $R' \subset R$ it does not necessarily follow that the corresponding $B_{n}$-partitions are comparable in $NC_B(n)$. For example we have \[ \eta( \{3,4\} , \{1,5\} ) = \{ \{ 1, -3\}, \{-1,3\}, \{2,-2\}, \{4,5\}, \{-4,-5\}\},\] and when comparing with our first example, we see \[ \eta (\{2,3,4\}, \{1,4,5\}) \nleq \eta( \{3,4\}, \{1,5\}).\]

Nonetheless, something weaker is true, which will allow us to give an explicit symmetric boolean decomposition of $NC_B(n)$. (The first proof of this fact is due to Hersh \cite{H99}. An inductive proof was given in \cite{PetersenShards}.) Notice that if the same element appears in both $L$ and $R$, then that element is a singleton block in $\eta(L,R)$. Hence, removing common elements from $(L,R)$ pairs will always coarsen the corresponding partition. We have the following.

\begin{lem}\label{lem:B}
Suppose $(L,R) \in [n]\times [n]$, and that there is an element $i \in L\cap R$. Then \[ \eta(L,R) \leq \eta(L-\{i\}, R-\{i\}).\] 
\end{lem}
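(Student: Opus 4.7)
My plan is to work directly with the cyclic parenthesization that defines $\eta$ and show that removing an element from $L \cap R$ corresponds to a single cover relation in $NC_B(n)$: specifically, the merger of the singleton blocks $\{i\}$ and $\{-i\}$ with their immediately enclosing blocks.

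The first step is to observe that, because $i \in L \cap R$, the parenthesized string defining $\eta(L,R)$ contains the immediately matched pairs $(i)$ and $(-i)$ (no other symbols sit between the parentheses surrounding $i$, and similarly for $-i$). These self-matching pairs contribute exactly the singleton blocks $\{i\}$ and $\{-i\}$ to $\eta(L,R)$. The parenthesization that defines $\eta(L-\{i\}, R-\{i\})$ is identical to the previous one except that these two matched pairs have been erased, with $i$ and $-i$ left in their places as unparenthesized symbols.

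The next step is to argue that erasing an immediately matched pair of parentheses has a purely local effect on the resulting set partition. Since the block structure is determined by the innermost-outward matching of parentheses, erasing $(i)$ leaves every other pairing of parentheses unchanged; the symbol $i$ is now contained in the block corresponding to the matched pair that immediately encloses the erased $(i)$, or, if $(i)$ sat at the outermost cyclic level, in the block of uncovered symbols (the zero block of $\eta(L-\{i\}, R-\{i\})$). Call this block $B$. By the cyclic $j \leftrightarrow -j$ symmetry of the parenthesization, $-i$ simultaneously joins $-B$. Hence $\eta(L-\{i\}, R-\{i\})$ is obtained from $\eta(L,R)$ by merging $\{i\}$ into $B$ and $\{-i\}$ into $-B$ and making no other changes. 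When $B \neq -B$ these are paired non-zero blocks and the merger is a single cover in $NC_B(n)$; when $B = -B$ (the zero block, possibly newly formed as $\{i,-i\}$ if $(i)$ was at the outermost level and no zero block previously existed) the simultaneous merger of $\{i\}$ and $\{-i\}$ with $B$ is again one cover relation. In particular, $\eta(L,R) \leq \eta(L-\{i\}, R-\{i\})$.

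The main obstacle will be the bookkeeping around the cyclic reading of parentheses, especially to identify the enclosing block $B$ unambiguously and to handle the corner case in which $(i)$ and $(-i)$ sit at the outermost level and give rise to (or expand) the zero block of $\eta(L-\{i\},R-\{i\})$. I would treat this by making the innermost-outward peeling definition of the matching explicit, noting that deleting any innermost matched pair commutes with that peeling, and then verifying the outermost-level case directly (as in the extreme example $L=R=[n]$, where removing a single $i$ produces the new zero block $\{i,-i\}$).
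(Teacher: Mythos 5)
Your proposal is correct and follows essentially the same route as the paper, which justifies the lemma with exactly this observation (stated in one sentence just before the lemma): since $i \in L\cap R$ forces the immediately matched pairs $(i)$ and $(-i)$, the elements $i$ and $-i$ are singleton blocks of $\eta(L,R)$, and erasing those innermost pairs merely merges the singletons into their enclosing (or zero) blocks, coarsening the partition. Your write-up supplies the careful bookkeeping the paper omits, but the underlying idea is identical.
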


Define the set of pairs with no elements in common: \[ P_{B,\emptyset}(n) = \{ (L,R) \in [n]\times [n] : |L|=|R|, L\cap R = \emptyset\}.\] Further, for any pair $(L,R) \in P_{B,\emptyset}(n)$, let \[ \B(L,R) = \{ (L',R') : L' = L \cup A, R'= R \cup A \mbox{ for some } A \in [n]-L-R\}.\] 

If we partially order $\B(L,R)$ by reverse inclusion, we see that it is the boolean algebra on the complement of $L \cup R$. Moreover, Lemma \ref{lem:B} shows that this partial order is commensurate with the partial order on $NC_B(n)$. Hence, we have a partition of $(NC_B(n),\leq)$ into boolean pieces. 

\begin{prp}\label{prop:LR-sbd}
We have the following partition of $NC_B(n)$:
\[ NC_B(n) = \bigcup_{(L,R) \in P_{B,\emptyset}} \eta( \B(L,R)).\]
In particular, $NC_{B}$ has a symmetric boolean decomposition.
\end{prp}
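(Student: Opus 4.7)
The plan follows the structure of Theorem \ref{thm:SBD} in the type A case: first show that the displayed union is a partition of $NC_B(n)$, then verify each piece meets the three conditions of Definition \ref{def:sbd} with parameter $j := |L|$.

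For the partition, work through Reiner's bijection $\eta: P_B(n) \to NC_B(n)$. Given any $(L',R') \in P_B(n)$, setting $A := L' \cap R'$ and $(L,R) := (L' \setminus A, R' \setminus A)$ produces $(L,R) \in P_{B,\emptyset}(n)$ with $(L',R') \in \B(L,R)$. The pair $(L,R)$ is uniquely determined by $(L',R')$, since $L \cap R = \emptyset$ forces $(L \cup A) \cap (R \cup A) = A$, so the common part can be recovered from the pair. Applying $\eta$ transports the resulting disjoint union of the sets $\B(L,R)$ to the asserted partition of $NC_B(n)$.

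Now fix $(L,R) \in P_{B,\emptyset}(n)$ and set $j := |L|$. The elements of $\B(L,R)$ correspond bijectively to subsets $A \subseteq [n] \setminus (L \cup R)$, giving $|\eta(\B(L,R))| = 2^{n-2j}$. I would show that $A \mapsto \eta(L \cup A, R \cup A)$ is an order-reversing bijection between $(2^{[n] \setminus (L \cup R)}, \subseteq)$ and the induced subposet $(\eta(\B(L,R)), \leq)$. One direction — that $A' \subseteq A$ implies $\eta(L \cup A, R \cup A) \leq \eta(L \cup A', R \cup A')$ — follows by iterating Lemma \ref{lem:B}, removing the elements of $A \setminus A'$ from the common part one at a time. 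The converse uses the observation that $\{i\}$ is a singleton block of $\eta(L'',R'')$ if and only if $i \in L'' \cap R''$: if $\pi_2 \leq \pi_1$ in $NC_B(n)$ and $i \in A_1$, then the block of $\pi_2$ containing $i$ must be contained in the singleton $\{i\} \in \pi_1$, so $\{i\} \in \pi_2$, forcing $i \in A_2$.

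Finally, the rank of $\eta(L \cup A, R \cup A)$ equals $n - |L \cup A| = (n-j) - |A|$, so ranks in the piece range from $j$ (uniquely achieved at $A = [n] \setminus (L \cup R)$, the minimum element of the piece) up to $n-j$ (at $A = \emptyset$). Each piece therefore has middle rank $n/2$, matching that of $NC_B(n)$, and all three conditions of Definition \ref{def:sbd} are satisfied with parameter $j$. The main obstacle is establishing the boolean structure inside $NC_B(n)$; once Lemma \ref{lem:B} and the singleton-block characterization are in hand, summing over $(L,R) \in P_{B,\emptyset}(n)$ yields the symmetric boolean decomposition.
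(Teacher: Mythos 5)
Your argument is correct and follows essentially the same route as the paper: extract the common part $A = L'\cap R'$ to obtain the partition of $P_B(n)$ into the classes $\B(L,R)$, and use Lemma \ref{lem:B} to carry the boolean order on subsets $A$ over to the induced order in $NC_B(n)$, with the rank computation $n-|L\cup A|$ giving the symmetry about $n/2$. The only difference is that you additionally prove the induced order on each piece is \emph{exactly} boolean via the characterization of singleton blocks as elements of $L''\cap R''$; this is a correct strengthening, but Definition \ref{def:sbd} only requires a boolean subposet of the induced poset, so the paper stops at the forward direction.
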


Figure~\ref{fig:B3-sbd} depicts the symmetric boolean decomposition corresponding to the description given in Proposition~\ref{prop:LR-sbd}. This partition of $NC_B(n)$ is also implicit in \cite[Section 3.5]{NP11}. It would interesting to provide a similar decomposition for noncrossing partitions of type D. See \cite[Section 6]{PetersenShards} for more comments on the matter.

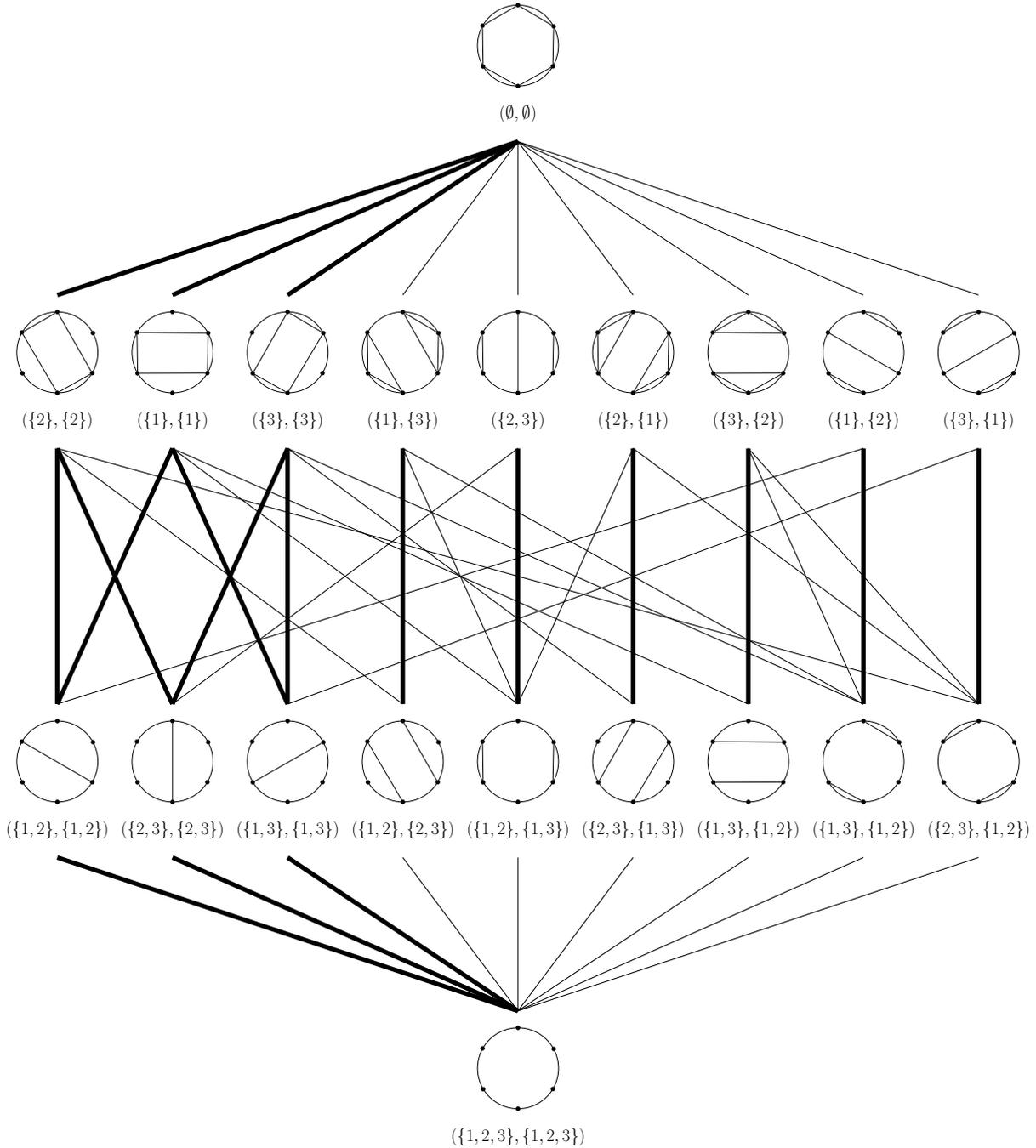
\begin{figure}[h!]
\begin{tikzpicture}[>=stealth',bend angle=45,auto, scale=.8]
\tikzstyle{state1}=[rectangle,draw=white,scale=.45]
\tikzstyle{state3}=[circle, fill=black, scale=.4]
\draw (9,-2) node[state1] (a0)
{ \begin{tikzpicture}[scale=.7]
\draw(9,3) circle (2.01cm);
\fill (9,5.01) node[state3] (1) {};%circle (1.5pt);%1
\fill (10.77,3.96) node[state3] (2) {};%2
\fill (10.73,1.97) node[state3] (3) {};%3
\fill (9,0.99) node[state3] (2) (-1){};%-1
\fill (7.27,1.96) node[state3] (-2) {};%-2
\fill (7.24,3.99) node[state3] (-3) {};%-3
\draw(9,-.4) node {\Large $(\{1,2,3\},\{1,2,3\})$};
\end{tikzpicture}};
\draw (0,4) node[state1] (a1) 
{ \begin{tikzpicture}[scale=.7]
\draw(9,3) circle (2.01cm);
\fill (9,5.01) node[state3] (1) {};%circle (1.5pt);%1
\fill (10.77,3.96) node[state3] (2) {};%2
\fill (10.73,1.97) node[state3] (3) {};%3
\fill (9,0.99) node[state3] (2) (-1){};%-1
\fill (7.27,1.96) node[state3] (-2) {};%-2
\fill (7.24,3.99) node[state3] (-3) {};%-3
\draw(9,-.4) node {\Large $(\{1,2\},\{1,2\})$};
\draw (-3)--(3);
\end{tikzpicture}};
\draw (2.25,4) node[state1] (a2) 
{ \begin{tikzpicture}[scale=.7]
\draw(9,3) circle (2.01cm);
\fill (9,5.01) node[state3] (1) {};%circle (1.5pt);%1
\fill (10.77,3.96) node[state3] (2) {};%2
\fill (10.73,1.97) node[state3] (3) {};%3
\fill (9,0.99) node[state3] (2) (-1){};%-1
\fill (7.27,1.96) node[state3] (-2) {};%-2
\fill (7.24,3.99) node[state3] (-3) {};%-3
\draw(9,-.4) node {\Large $(\{2,3\},\{2,3\})$};
\draw (1)--(-1);
\end{tikzpicture}};
\draw (4.5,4) node[state1] (a3) 
{ \begin{tikzpicture}[scale=.7]
\draw(9,3) circle (2.01cm);
\fill (9,5.01) node[state3] (1) {};%circle (1.5pt);%1
\fill (10.77,3.96) node[state3] (2) {};%2
\fill (10.73,1.97) node[state3] (3) {};%3
\fill (9,0.99) node[state3] (2) (-1){};%-1
\fill (7.27,1.96) node[state3] (-2) {};%-2
\fill (7.24,3.99) node[state3] (-3) {};%-3
\draw(9,-.4) node {\Large $(\{1,3\},\{1,3\})$};
\draw (2)--(-2);
\end{tikzpicture}};
\draw (6.75,4) node[state1] (a4) 
{ \begin{tikzpicture}[scale=.7]
\draw(9,3) circle (2.01cm);
\fill (9,5.01) node[state3] (1) {};%circle (1.5pt);%1
\fill (10.77,3.96) node[state3] (2) {};%2
\fill (10.73,1.97) node[state3] (3) {};%3
\fill (9,0.99) node[state3] (2) (-1){};%-1
\fill (7.27,1.96) node[state3] (-2) {};%-2
\fill (7.24,3.99) node[state3] (-3) {};%-3
\draw(9,-.4) node {\Large $(\{1,2\},\{2,3\})$};
\draw(1)--(3);
\draw(-1)--(-3);
\end{tikzpicture}};
\draw (9,4) node[state1] (a5) 
{ \begin{tikzpicture}[scale=.7]
\draw(9,3) circle (2.01cm);
\fill (9,5.01) node[state3] (1) {};%circle (1.5pt);%1
\fill (10.77,3.96) node[state3] (2) {};%2
\fill (10.73,1.97) node[state3] (3) {};%3
\fill (9,0.99) node[state3] (2) (-1){};%-1
\fill (7.27,1.96) node[state3] (-2) {};%-2
\fill (7.24,3.99) node[state3] (-3) {};%-3
\draw(9,-.4) node {\Large $(\{1,2\},\{1,3\})$};
\draw(2)--(3);
\draw(-2)--(-3);
\end{tikzpicture}};
\draw (11.25,4) node[state1] (a6) 
{ \begin{tikzpicture}[scale=.7]
\draw(9,3) circle (2.01cm);
\fill (9,5.01) node[state3] (1) {};%circle (1.5pt);%1
\fill (10.77,3.96) node[state3] (2) {};%2
\fill (10.73,1.97) node[state3] (3) {};%3
\fill (9,0.99) node[state3] (2) (-1){};%-1
\fill (7.27,1.96) node[state3] (-2) {};%-2
\fill (7.24,3.99) node[state3] (-3) {};%-3
\draw(9,-.4) node {\Large $(\{2,3\},\{1,3\})$};
\draw(1)--(-2);
\draw(2)--(-1);
\end{tikzpicture}};
\draw (13.5,4) node[state1] (a7) 
{ \begin{tikzpicture}[scale=.7]
\draw(9,3) circle (2.01cm);
\fill (9,5.01) node[state3] (1) {};%circle (1.5pt);%1
\fill (10.77,3.96) node[state3] (2) {};%2
\fill (10.73,1.97) node[state3] (3) {};%3
\fill (9,0.99) node[state3] (2) (-1){};%-1
\fill (7.27,1.96) node[state3] (-2) {};%-2
\fill (7.24,3.99) node[state3] (-3) {};%-3
\draw(9,-.4) node {\Large $(\{1,3\},\{1,2\})$};
\draw(2)--(-3);
\draw(3)--(-2);
\end{tikzpicture}};
\draw (15.75,4) node[state1] (a8) 
{ \begin{tikzpicture}[scale=.7]
\draw(9,3) circle (2.01cm);
\fill (9,5.01) node[state3] (1) {};%circle (1.5pt);%1
\fill (10.77,3.96) node[state3] (2) {};%2
\fill (10.73,1.97) node[state3] (3) {};%3
\fill (9,0.99) node[state3] (2) (-1){};%-1
\fill (7.27,1.96) node[state3] (-2) {};%-2
\fill (7.24,3.99) node[state3] (-3) {};%-3
\draw(9,-.4) node {\Large $(\{1,3\},\{1,2\})$};
\draw(1)--(2);
\draw(-1)--(-2);
\end{tikzpicture}};
\draw (18,4) node[state1] (a9) 
{ \begin{tikzpicture}[scale=.7]
\draw(9,3) circle (2.01cm);
\fill (9,5.01) node[state3] (1) {};%circle (1.5pt);%1
\fill (10.77,3.96) node[state3] (2) {};%2
\fill (10.73,1.97) node[state3] (3) {};%3
\fill (9,0.99) node[state3] (2) (-1){};%-1
\fill (7.27,1.96) node[state3] (-2) {};%-2
\fill (7.24,3.99) node[state3] (-3) {};%-3
\draw(9,-.4) node {\Large $(\{2,3\},\{1,2\})$};
\draw(1)--(-3);
\draw(3)--(-1);
\end{tikzpicture}};
\draw (0,12) node[state1] (b1) 
{ \begin{tikzpicture}[scale=.7]
\draw(9,3) circle (2.01cm);
\fill (9,5.01) node[state3] (1) {};%circle (1.5pt);%1
\fill (10.77,3.96) node[state3] (2) {};%2
\fill (10.73,1.97) node[state3] (3) {};%3
\fill (9,0.99) node[state3] (2) (-1){};%-1
\fill (7.27,1.96) node[state3] (-2) {};%-2
\fill (7.24,3.99) node[state3] (-3) {};%-3
\draw(9,-.4) node {\Large $(\{2\},\{2\})$};
\draw (1)--(3);
\draw (3)--(-1);
\draw (-1)--(-3);
\draw (-3)--(1);
\end{tikzpicture}};
\draw (2.25,12) node[state1] (b2) 
{ \begin{tikzpicture}[scale=.7]
\draw(9,3) circle (2.01cm);
\fill (9,5.01) node[state3] (1) {};%circle (1.5pt);%1
\fill (10.77,3.96) node[state3] (2) {};%2
\fill (10.73,1.97) node[state3] (3) {};%3
\fill (9,0.99) node[state3] (2) (-1){};%-1
\fill (7.27,1.96) node[state3] (-2) {};%-2
\fill (7.24,3.99) node[state3] (-3) {};%-3
\draw(9,-.4) node {\Large $(\{1\},\{1\})$};
\draw (-3)--(2);
\draw (-2)--(3);
\draw (2)--(3);
\draw (-2)--(-3);
\end{tikzpicture}};
\draw (4.5,12) node[state1] (b3) 
{ \begin{tikzpicture}[scale=.7]
\draw(9,3) circle (2.01cm);
\fill (9,5.01) node[state3] (1) {};%circle (1.5pt);%1
\fill (10.77,3.96) node[state3] (2) {};%2
\fill (10.73,1.97) node[state3] (3) {};%3
\fill (9,0.99) node[state3] (2) (-1){};%-1
\fill (7.27,1.96) node[state3] (-2) {};%-2
\fill (7.24,3.99) node[state3] (-3) {};%-3
\draw(9,-.4) node {\Large $(\{3\},\{3\})$};
\draw (1)--(2);
\draw (2)--(-1);
\draw (-1)--(-2);
\draw (-2)--(1);
\end{tikzpicture}};
\draw (6.75,12) node[state1] (b4) 
{ \begin{tikzpicture}[scale=.7]
\draw(9,3) circle (2.01cm);
\fill (9,5.01) node[state3] (1) {};%circle (1.5pt);%1
\fill (10.77,3.96) node[state3] (2) {};%2
\fill (10.73,1.97) node[state3] (3) {};%3
\fill (9,0.99) node[state3] (2) (-1){};%-1
\fill (7.27,1.96) node[state3] (-2) {};%-2
\fill (7.24,3.99) node[state3] (-3) {};%-3
\draw(9,-.4) node {\Large $(\{1\},\{3\})$};
\draw (1)--(2);
\draw (2)--(3);
\draw (1)--(3);
\draw (-1)--(-3);
\draw (-1)--(-2);
\draw (-2)--(-3);
\end{tikzpicture}};
\draw (9,12) node[state1] (b5) 
{ \begin{tikzpicture}[scale=.7]
\draw(9,3) circle (2.01cm);
\fill (9,5.01) node[state3] (1) {};%circle (1.5pt);%1
\fill (10.77,3.96) node[state3] (2) {};%2
\fill (10.73,1.97) node[state3] (3) {};%3
\fill (9,0.99) node[state3] (2) (-1){};%-1
\fill (7.27,1.96) node[state3] (-2) {};%-2
\fill (7.24,3.99) node[state3] (-3) {};%-3
\draw(9,-.4) node {\Large $(\{2,3\})$};
\draw (2)--(3);
\draw (1)--(-1);
\draw (-3)--(-2);
\end{tikzpicture}};
\draw (11.25,12) node[state1] (b6) 
{ \begin{tikzpicture}[scale=.7]
\draw(9,3) circle (2.01cm);
\fill (9,5.01) node[state3] (1) {};%circle (1.5pt);%1
\fill (10.77,3.96) node[state3] (2) {};%2
\fill (10.73,1.97) node[state3] (3) {};%3
\fill (9,0.99) node[state3] (2) (-1){};%-1
\fill (7.27,1.96) node[state3] (-2) {};%-2
\fill (7.24,3.99) node[state3] (-3) {};%-3
\draw(9,-.4) node {\Large $(\{2\},\{1\})$};
\draw (2)--(3);
\draw (3)--(-1);
\draw (2)--(-1);
\draw (-2)--(-3);
\draw (-3)--(1);
\draw (1)--(-2);
\end{tikzpicture}};
\draw (13.5,12) node[state1] (b7) 
{ \begin{tikzpicture}[scale=.7]
\draw(9,3) circle (2.01cm);
\fill (9,5.01) node[state3] (1) {};%circle (1.5pt);%1
\fill (10.77,3.96) node[state3] (2) {};%2
\fill (10.73,1.97) node[state3] (3) {};%3
\fill (9,0.99) node[state3] (2) (-1){};%-1
\fill (7.27,1.96) node[state3] (-2) {};%-2
\fill (7.24,3.99) node[state3] (-3) {};%-3
\draw(9,-.4) node {\Large $(\{3\},\{2\})$};
\draw (1)--(2);
\draw (2)--(-3);
\draw (-3)--(1);
\draw (3)--(-2);
\draw (3)--(-1);
\draw (-1)--(-2);
\end{tikzpicture}};
\draw (15.75,12) node[state1] (b8) 
{ \begin{tikzpicture}[scale=.7]
\draw(9,3) circle (2.01cm);
\fill (9,5.01) node[state3] (1) {};%circle (1.5pt);%1
\fill (10.77,3.96) node[state3] (2) {};%2
\fill (10.73,1.97) node[state3] (3) {};%3
\fill (9,0.99) node[state3] (2) (-1){};%-1
\fill (7.27,1.96) node[state3] (-2) {};%-2
\fill (7.24,3.99) node[state3] (-3) {};%-3
\draw(9,-.4) node {\Large $(\{1\},\{2\})$};
\draw (1)--(2);
\draw (3)--(-3);
\draw (-2)--(-1);
\end{tikzpicture}};
\draw (18,12) node[state1] (b9) 
{ \begin{tikzpicture}[scale=.7]
\draw(9,3) circle (2.01cm);
\fill (9,5.01) node[state3] (1) {};%circle (1.5pt);%1
\fill (10.77,3.96) node[state3] (2) {};%2
\fill (10.73,1.97) node[state3] (3) {};%3
\fill (9,0.99) node[state3] (2) (-1){};%-1
\fill (7.27,1.96) node[state3] (-2) {};%-2
\fill (7.24,3.99) node[state3] (-3) {};%-3
\draw(9,-.4) node {\Large $(\{3\},\{1\})$};
\draw (1)--(-3);
\draw (2)--(-2);
\draw (3)--(-1);
\end{tikzpicture}};
\draw (9,18) node[state1] (c0) 
{ \begin{tikzpicture}[scale=.7]
\draw(9,3) circle (2.01cm);
\fill (9,5.01) node[state3] (1) {};%circle (1.5pt);%1
\fill (10.77,3.96) node[state3] (2) {};%2
\fill (10.73,1.97) node[state3] (3) {};%3
\fill (9,0.99) node[state3] (2) (-1){};%-1
\fill (7.27,1.96) node[state3] (-2) {};%-2
\fill (7.24,3.99) node[state3] (-3) {};%-3
\draw (1)--(2);
\draw (2)--(3);
\draw (3)--(-1);
\draw (-1)--(-2);
\draw (-2)--(-3);
\draw (-3)--(1);
\draw(9,-.4) node {\Large $(\emptyset,\emptyset)$};
\end{tikzpicture}};
\draw [line width=2pt] (0,5.5)--(0,10.5); %a1-b1
\draw [line width=2pt] (0,5.5)--(2.25,10.5); %a1-b2
\draw (0,5.5)--(15.75,10.5); %a1-b8
\draw [line width=2pt] (2.25,5.5)--(0,10.5); %a2-b1
\draw [line width=2pt] (2.25,5.5)--(4.5,10.5); %a2-b3
\draw (2.25,5.5)--(9,10.5); %a2-b5
\draw [line width=2pt] (4.5,5.5)--(2.25,10.5); %a3-b2
\draw [line width=2pt] (4.5,5.5)--(4.5,10.5); %a3-b3
\draw (4.5,5.5)--(18,10.5); %a3-b9
\draw [line width=2pt] (6.75,5.5)--(6.75,10.5); %a4-b4
\draw (6.75,5.5)--(0,10.5); %a4-b1
\draw [line width=2pt] (9,5.5)--(9,10.5); %a5-b4
\draw (9,5.5)--(2.25,10.5); %a5-b2
\draw (9,5.5)--(6.75,10.5); %a5-b4
\draw (9,5.5)--(11.25,10.5); %a5-b6
\draw [line width=2pt] (11.25,5.5)--(11.25,10.5); %a6-b4
\draw (11.25,5.5)--(4.5,10.5); %a6-b2
\draw [line width=2pt] (13.5,5.5)--(13.5,10.5); %a7-b4
\draw (13.5,5.5)--(2.25,10.5); %a7-b2
\draw [line width=2pt] (15.75,5.5)--(15.75,10.5); %a8-b8
\draw (15.75,5.5)--(4.5,10.5); %a8-b3
\draw (15.75,5.5)--(6.75,10.5); %a8-b4
\draw (15.75,5.5)--(13.5,10.5); %a8-b7
\draw [line width=2pt] (18,5.5)--(18,10.5); %a9-b9
\draw (18,5.5)--(0,10.5); %a9-b1
\draw (18,5.5)--(11.25,10.5); %a9-b6
\draw (18,5.5)--(13.5,10.5); %a9-b7
%a0 to a's
\draw [line width=2pt] (9,-.5)--(0,2.5);
\draw [line width=2pt] (9,-.5)--(2.25,2.5);
\draw [line width=2pt] (9,-.5)--(4.5,2.5);
\draw (9,-.5)--(6.75,2.5);
\draw (9,-.5)--(9,2.5);
\draw (9,-.5)--(11.25,2.5);
\draw (9,-.5)--(13.5,2.5);
\draw (9,-.5)--(15.75,2.5);
\draw (9,-.5)--(18,2.5);
%c0 to b's
\draw [line width=2pt] (9,16.5)--(0,13.5);
\draw [line width=2pt] (9,16.5)--(2.25,13.5);
\draw [line width=2pt] (9,16.5)--(4.5,13.5);
\draw (9,16.5)--(6.75,13.5);
\draw (9,16.5)--(9,13.5);
\draw (9,16.5)--(11.25,13.5);
\draw (9,16.5)--(13.5,13.5);
\draw (9,16.5)--(15.75,13.5);
\draw (9,16.5)--(18,13.5);
\end{tikzpicture}
\caption{The lattice $NC_{B}(n)$. Boldface lines indicate the symmetric boolean decomposition exhibited in Proposition~\ref{prop:LR-sbd}.}
\label{fig:B3-sbd}
\end{figure}

Two comments are in order: 
\begin{enumerate}
\item[(i)] Since $|NC_{B}(n)|=\binom{2n}{n}$ (see, e.g.,~\cite{R97}), Proposition~\ref{prop:LR-sbd} yields
\[
\binom{2n}{n}=\sum_{i=0}^{\lfloor n/2\rfloor}\binom{n}{i}\binom{n-i}{i}2^{n-2i}.
\]
\item[(ii)] Moreover, recalling the correspondence between type B noncrossing partitions and elements in the interval $[e,c]_B$ in the absolute order on the hyperoctahedral group $B_{n}$, we have \[ \sum_{\pi \in NC_B(n)} t^{\rk(\pi)} = \sum_{\sigma \in [e,c]_B} t^{\ell'(\sigma)} = \sum_{ (L,R) \in P_{B,\emptyset}} t^{|L|}(1+t)^{n-2|L|} = \sum_{0\leq 2i \leq n} \binom{n}{i,i}t^i(1+t)^{n-2i}.\] This expression was also noted in \cite{PRW08}.

It would then seem natural to see if we can study the joint distribution of length and reflection length via this decomposition, but computer experimentation shows that the polynomial \[ \sum_{\sigma \in [e,c]_B} q^{\ell(\sigma)}t^{\ell'(\sigma)} \] does not have an expansion akin to the one for type A given in Theorem \ref{thm:main}.
\end{enumerate}

\bigskip

\noindent\textbf{Acknowledgements.}
The authors would like to thank Drew Armstrong for early discussions on the topic, as well as the organizers of the Fall 2010 Banff workshop on quasisymmetric functions, where early discussions on the topic took place. We also thank Christian Stump and Bridget Tenner for valuable comments on earlier versions of the paper.

\end{document}